\newcommand{\mathsym}[1]{{}}
\newtheorem{thm}{Theorem}[section]
\newtheorem{cor}[thm]{Corollary}
\newtheorem{lem}[thm]{Lemma}
\newtheorem{prop}[thm]{Proposition}
\theoremstyle{definition}
\newtheorem{defn}{Definition}[section]
\numberwithin{equation}{section}
\theoremstyle{remark}
\newtheorem{rem}{Remark}[section]
\theoremstyle{example}
\newcommand{\n}{\nabla}
\newcommand{\dmu }{\mathrm{d}\mu }
\newcommand{\md }{\mathrm{d}}
\newcommand{\be}{\begin{equation}}
\newcommand{\ee}{\end{equation}}
\newcommand{\ba}{\begin{eqnarray}}
\newcommand{\ea}{\end{eqnarray}}
\newcommand{\ban}{\begin{eqnarray*}}
\newcommand{\ean}{\end{eqnarray*}}
\newcommand{\lf}{\left}
\newcommand{\rt}{\right}
\newcommand{\bpf}{\begin{proof} }
\newcommand{\epf}{\end{proof} }
\begin{document}
\title{An upper bound of the heat kernel along the harmonic-Ricci flow}
\author{Shouwen Fang and Tao Zheng}
\subjclass[2010]{53C21, 53C44}
\keywords{heat kernel, harmonic-Ricci flow, Sobolev inequality, log-Sobolev inequality}

\maketitle
\begin{abstract}
In this paper, we first derive a Sobolev inequality along the harmonic-Ricci flow.
We then prove a linear parabolic estimate based on the Sobolev inequality and Moser's iteration.
As an application, we will obtain an upper bound estimate for the heat kernel under the flow.
\end{abstract}
\section{Introduction}
\setcounter{equation}{0}
Let $M^n$ be an $n$ dimensional closed smooth manifold and assume $n\ge 3$.
In \cite{MR}, M\"{u}ller studied a system of the Ricci flow coupled with a harmonic map heat flow
\begin{equation}\left
\{\begin{array}{l}
\large{\partial_t g=-2\text{Ric}+2\alpha(t) \nabla\phi\otimes \nabla\phi},\\
\large{\partial_t \phi=\tau_{g}\phi,}
\end{array}\right.\label{O1}
\end{equation}
where $\phi(\cdot,t):(M,g(\cdot,t))\rightarrow(N,h)$ is a family of smooth maps between two Riemannian manifolds, both $g(\cdot,t)$ and $h$ are Riemannian metrics, $\alpha(t)$ is a positive non-increasing function, and $\tau_{g}\phi$ denotes the intrinsic Laplacian of $\phi$. This flow is also called
harmonic-Ricci flow (cf. \cite{MH,MR,Zh}). The harmonic-Ricci flow may
be one of helpful tools in finding the harmonic map between two Riemannian manifolds.
If the target manifold $N$ is $\mathbb{R}$, the harmonic-Ricci flow reduces to the extended Ricci flow, which was first introduced by List in \cite{BL}. The extended Ricci flow is very useful in general relativity.
If $\phi$ is a constant map, the system (\ref{O1}) degenerates to Hamilton's Ricci flow discussed widely recently,
see for example the book \cite{CLN} and seminal papers \cite{CZ, H1, H2, P1}.
Similarly as Ricci flow and the extended Ricci flow, corresponding theories for the harmonic-Ricci flow were established in \cite{MR}, such as the
short time existence, the $\mathcal{W}$ entropy, the $\mathcal{F}$ entropy, reduced length and reduced volume. Hence the harmonic-Ricci flow may be
investigated through the methods used in the Ricci flow.

In this paper, along the harmonic-Ricci flow, we consider the heat kernel $G(x,t,y,s)$ which is the fundamental solution of the following heat equation
\begin{equation}\label{O2}
(\Delta-\partial_t)u(x,t)=0.
\end{equation}
The estimate for heat kernel has always been an interesting topic in the study of differential equations on manifolds. In their celebrated paper \cite{LY}, Li and Yau derived some point-wise gradient estimates for the positive solutions of (\ref{O2}) on complete manifolds with
fixed metric and lower bounded Ricci curvature, from which the upper and lower bounds on the heat kernel were obtained.
Wang \cite{W} proved a global gradient estimate when the boundary of manifold is nonconvex, and got both upper and lower bounds for the heat kernel with Neumann conditions. Later, in \cite{M, MH, LW} the evolved metrics were studied, and some bounds on heat kernel under some geometric flows (e.g. the Ricci flow and the extended Ricci flow) were also derived with the assistance of the Sobolev inequality.

It is well-known that the Sobolev inequality is an important analytical tool in geometric analysis. Recently, there occur many
interesting results on the Sobolev inequality under different geometric flows, especially the Ricci flow.
In \cite{Hs, KZ, Y1, Y2, Z1, Z2, Z3}, some uniform Sobolev inequalities were proved along Ricci flow by using the monotonicity of Perelman's $\mathcal{W}$ entropy. As a consequence, Perelman's short time non-collapsing was extended to a long time version.
In particular, by the Sobolev inequality, Zhang \cite{Z1} proved a global upper bound for the fundamental solution of a heat equation under backward Ricci flow with the assumption that Ricci curvature is nonnegative and the injectivity radius is bounded from below.

The main purpose of this paper is to establish the uniform Sobolev inequality and an upper bound for the heat kernel under the harmonic-Ricci flow.
For convenience, we denote as in \cite{F1,F2,BL,MR} the symmetric two-tensor field $S_{y}$ with components $S_{ij}$ and its trace $S:=g^{ij}S_{ij}$  by
\begin{equation*}
S_{ij}:=R_{ij}-\alpha(t)\nabla_i \phi\nabla_j \phi\text{\quad and\quad}S:=R-\alpha(t)|\nabla\phi|^2,
\end{equation*}
where $R_{ij}$ and $R$ are the Ricci curvature components and the scalar curvature of $(M, g)$ respectively.
Using the monotonicity of the $\mathcal{W}$ entropy, we obtain the following Sobolev inequality.
\begin{thm}\label{th1}
Let ($g(x,t), \phi(x,t)$) be a solution to the harmonic-Ricci flow (\ref{O1}) in $M^n\times[0,T)$ with initial value ($g_0, \phi_0$).
Let  $A_0$ and $B_0$ be positive numbers such that the following $L^2$ Sobolev inequality holds initially,
i.e. for any $v\in W^{1,2}(M, g_0)$,
$$
\lf(\int_M v^{\frac{2n}{n-2}}\md\mu\lf(g_0\rt)\rt)^{\frac{n-2}{n}}\leq A_0 \int_M\lf(|\nabla v|^2+\frac{1}{4}Sv^2\rt)\md\mu\big(g_0\big)+B_0 \int_M v^2\md\mu\lf(g_0\rt).
$$
Then for all $v\in W^{1,2}(M,g(t))$ we have
\begin{eqnarray*}
\lf(\int_M v^{\frac{2n}{n-2}}\md\mu\big(g(t)\big)\rt)^{\frac{n-2}{n}}\leq A(t) \int_M \lf(|\nabla v|^2+\frac{1}{4}Sv^2\rt)\md\mu\big(g(t)\big)
+B(t)\int_M v^2\md\mu\big(g(t)\big),
\end{eqnarray*}
where $A(t)=Ce^{\frac{8tB_0}{n A_0}}A_0$, $B(t)=Ce^{\frac{8tB_0}{n A_0}}B_0$, and $C$ is a positive constant depending only on $A_0$, $B_0$, $g_0$,
$\phi_0$ and $n$. In particular, if $S\geq 0$ at the initial time, then $C$ is a positive constant depending only on $n$.
\end{thm}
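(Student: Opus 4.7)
The plan is to follow the strategy pioneered by Zhang for the Ricci flow, transferring the Sobolev inequality from $t=0$ to a general time $t$ by routing through the log-Sobolev inequality and exploiting the monotonicity of Müller's $\mathcal{W}$-entropy for the harmonic-Ricci flow.

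\textbf{Step 1 (Sobolev $\Rightarrow$ log-Sobolev at $t=0$).} Apply Jensen's inequality to the initial $L^2$-Sobolev hypothesis. For any $v\in W^{1,2}(M,g_0)$ normalized by $\int_M v^2\,\md\mu(g_0)=1$, a standard interpolation between $L^2$ and $L^{2n/(n-2)}$ yields a one-parameter family of log-Sobolev inequalities
$$
\int_M v^2\ln v^2\,\md\mu(g_0)\leq \sigma\int_M\lf(|\nabla v|^2+\tfrac{1}{4}Sv^2\rt)\md\mu(g_0)-\tfrac{n}{2}\ln\sigma+C_1(A_0,B_0,n),
$$
valid for every $\sigma>0$. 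Under the hypothesis $S\ge 0$ at $t=0$, the constant $C_1$ depends only on $n$.

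\textbf{Step 2 ($\mathcal{W}$-entropy monotonicity).} Recall Müller's $\mathcal{W}$-entropy for the harmonic-Ricci flow,
$$
\W(g,\phi,f,\tau)=\int_M\bigl[\tau(S+|\nabla f|^2)+f-n\bigr](4\pi\tau)^{-n/2}e^{-f}\,\md\mu,
$$
subject to the constraint $\int_M(4\pi\tau)^{-n/2}e^{-f}\,\md\mu=1$, with the associated $\mu$-invariant $\mu(g,\tau)=\inf_f\W(g,\phi,f,\tau)$. The monotonicity established in \cite{MR} (together with the evolution $\partial_t\tau=-1$ for a backward time parameter) implies that a lower bound on $\mu(g_0,\sigma_0)$ propagates to the corresponding lower bound on $\mu(g(t),\sigma_0-t)$. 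Recasting the log-Sobolev inequality of Step 1 as a lower bound for $\mu(g_0,\cdot)$ and translating by $t$, we obtain the analogous log-Sobolev inequality at time $t$, where the parameter has been shifted and the constant has picked up controlled corrections depending on $t$, $A_0$, $B_0$, and (when $S(0)$ is not assumed nonnegative) on the initial $S_{g_0,\phi_0}$.

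\textbf{Step 3 (Log-Sobolev $\Rightarrow$ Sobolev at time $t$).} Invert the implication of Step 1 via a Gross/Davies argument: starting from the log-Sobolev inequality at time $t$ and iterating (equivalently, optimizing over $\sigma$ after estimating $\|v\|_p$ for $p\to 2n/(n-2)$), one recovers a Sobolev inequality of the stated form. The optimization over $\sigma$ in the shifted log-Sobolev inequality produces the exponential factor $e^{8tB_0/(nA_0)}$, and the coefficients $A(t)$ and $B(t)$ scale multiplicatively with $A_0$ and $B_0$ respectively.

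\textbf{Main obstacle.} The delicate step is Step 2: ensuring that the harmonic-Ricci version of Perelman's monotonicity yields a log-Sobolev propagation in which the quadratic form is exactly $|\nabla v|^2+\tfrac14 Sv^2$ rather than $|\nabla v|^2+\tfrac14 Rv^2$. The extra term $\alpha(t)|\nabla\phi|^2$ coming from the coupling must be absorbed cleanly into $S$, and one must verify that the evolution formula for $\W$ (which involves $|S_{ij}+\nabla_i\nabla_j f-\tfrac{1}{2\tau}g_{ij}|^2$ together with a nonnegative harmonic-map-type term) does not introduce sign-indefinite contributions. A secondary technical point is tracking how $C$ depends on $\phi_0$ when the initial $S$ is not assumed nonnegative: one bounds $\inf_M S_{g_0,\phi_0}$ in terms of the initial data to control the shift in the log-Sobolev constant along the flow.
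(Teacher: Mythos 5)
Your proposal is correct in outline and follows the same skeleton as the paper: Jensen's inequality converts the initial Sobolev inequality into a one-parameter family of log-Sobolev inequalities; the monotonicity of the $\mathcal{W}$-entropy (the paper compares $\inf\mathcal{W}(g_0,\cdot,t_0+\epsilon^2)$ directly with $\inf\mathcal{W}(g(t_0),\cdot,\epsilon^2)$, which is the same as your $\mu$-invariant propagation) transports the log-Sobolev inequality to time $t_0$ with the extra additive term $4(t_0+\epsilon^2)B_0/A_0$, and this extra term is exactly what produces the factor $e^{8tB_0/(nA_0)}$ upon conversion back to a Sobolev inequality. The one genuine divergence is the final step. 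You propose the Gross/Davies route (log-Sobolev $\Rightarrow$ ultracontractivity of the semigroup of $-\Delta+\tfrac14 S$ $\Rightarrow$ Sobolev); the paper explicitly declines this route, noting that the heat-kernel-based equivalence in Zhang's book does not apply verbatim to the flow-adapted quadratic form, and instead proves the equivalence (Lemma 2.3) by the elementary truncation argument of Bakry--Coulhon--Ledoux--Saloff-Coste: with $f_k=\min\{(f-2^k)^{+},2^k\}$ one sums the interpolation inequality $\|f_k\|_2\le W(f_k)^{\theta}\|f_k\|_s^{1-\theta}$ over dyadic level sets and controls $\sum_k W(f_k)^2\le W(f)^2$ after replacing $S$ by $S+S_0\ge 0$, which is precisely where the dependence of $C$ on $g_0$ and $\phi_0$ enters and why it drops out when $S\ge 0$ initially. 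Your route is viable in principle but would force you to handle a Schr\"odinger semigroup with the sign-indefinite potential $\tfrac14 S$ and to verify ultracontractivity with the right time exponent, whereas the BCLS argument is purely measure-theoretic; in both approaches the positivity of $\lambda_1=\inf\{W(f)^2:\|f\|_2=1\}$, forced by the log-Sobolev inequality being bounded below, is what absorbs the $S_0$ correction. Finally, your flagged ``main obstacle'' about absorbing $\alpha(t)|\nabla\phi|^2$ into $S$ is not an issue here: the $\mathcal{W}$-entropy is defined from the outset with $S$ and the conjugate equation $\Delta u-Su+\partial_t u=0$, and its monotonicity (Proposition 2.1) is quoted from M\"uller and Fang--Zhu rather than re-derived.
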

Based on the Sobolev inequality and Moser's iteration, Ye \cite{Y1} proved a linear parabolic estimate under the Ricci flow, which was applied to get the upper bound of curvature tensor. Jiang \cite{J} gave a linear parabolic estimate along the K\"ahler-Ricci flow, from which he obtained upper bound estimates of the scalar curvature and the gradient of Ricci potential. Here from the above Sobolev inequality, we can get the following linear parabolic estimate.
\begin{thm}\label{th2}
Assume that ($g(x,t), \phi(x,t)$) is a smooth solution to the harmonic-Ricci flow (\ref{O1}) in $M^n\times[0,T]$ with initial value ($g_0, \phi_0$) and
$S\geq0$ at the initial time.
Let $f$ be a nonnegative Lipschitz continuous function on $M\times[0, T]$ satisfying
\begin{equation}\label{O3}
\partial_t f\leq \Delta f+af
\end{equation}
on $M\times[0, T]$ in the weak sense, where $a\geq 0$. Then we have for any $0<t\leq T$ and $p>0$
\begin{equation*}
\sup_{x\in M} |f(x,t)|\leq \left(C_1a+\frac{C_2}{t}\right)^{\frac{n+2}{2p}}\left(\int_{0}^T\int_Mf^{p}\md\mu \md t\right)^{\frac{1}{p}},
\end{equation*}
where $C_1$ and $C_2$ are both positive constants depending on dimension $n$, $p$, $g_0$, $\phi_0$ and the first eigenvalue $\lambda_0$ of $\mathcal{F}$ entropy with respect to $g_0$.
\end{thm}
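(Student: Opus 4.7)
The plan is to run a parabolic Moser iteration based on the uniform Sobolev inequality of Theorem~\ref{th1}. As a preliminary, $S\ge 0$ at $t=0$ is preserved along the flow (by the maximum principle applied to the evolution equation for $S$, using $\alpha'\le 0$), so the constants $A(t),B(t)$ of Theorem~\ref{th1} are uniformly bounded on $[0,T]$ by quantities $A^*,B^*$ depending only on $n,g_0,\phi_0,\lambda_0$ (and implicitly $T$); the $\lambda_0$-dependence enters through the initial Sobolev constants $A_0,B_0$.

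Testing $\partial_t f\le\Delta f+af$ against $qf^{q-1}$ for $q\ge\max(p,1)$ and using $\partial_t d\mu=-S\,d\mu$ together with integration by parts gives
\[
\frac{d}{dt}\int_M f^q\,d\mu+\frac{4(q-1)}{q}\int_M|\nabla f^{q/2}|^2\,d\mu+\int_M Sf^q\,d\mu\le qa\int_M f^q\,d\mu.
\]
Since $(q-1)/q\le 1$ and $S\ge 0$, the two dissipative terms may be regrouped as $\frac{4(q-1)}{q}\int_M(|\nabla f^{q/2}|^2+\frac14 Sf^q)\,d\mu$ plus a nonnegative residual, which Theorem~\ref{th1} applied to $v=f^{q/2}$ bounds from below by $(A^*)^{-1}\|f\|_{L^{qn/(n-2)}}^q-(B^*/A^*)\int f^q\,d\mu$. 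Inserting a piecewise-linear cutoff $\eta(s)$ vanishing at $s_1$ and equal to $1$ on $[s_2,T]$ and integrating in $s$ then yields, with $\nu=n/(n-2)$,
\[
\sup_{s_2\le t\le T}\int_M f^q\,d\mu+\int_{s_2}^T\|f\|_{L^{q\nu}}^q\,ds\le C\Bigl(\tfrac{1}{s_2-s_1}+qa+1\Bigr)\int_{s_1}^T\!\!\int_M f^q\,d\mu\,ds.
\]
The H\"older interpolation $\int\!\!\int f^{q\theta}\le(\sup_t\int f^q)^{2/n}\int\!\!\int f^{q\nu}$ with $\theta=1+2/n$ then converts this to
\[
\|f\|_{L^{q\theta}([s_2,T]\times M)}\le K_q\Bigl(\tfrac{1}{s_2-s_1}+qa+1\Bigr)^{1/q}\|f\|_{L^q([s_1,T]\times M)}.
\]

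I then iterate with $q_k=p\theta^k$ and $s_k=t(1-2^{-k})$, so that $s_k\nearrow t$ and $s_{k+1}-s_k=t\,2^{-(k+1)}$. Since $\sum_{k\ge 0}q_k^{-1}=\theta/(p(\theta-1))=(n+2)/(2p)$, the telescoped product of iteration constants produces precisely the exponent $(n+2)/(2p)$. The factor $2^{k+1}/t+q_k a+1$ at step $k$ can be written as $O(\gamma^k)(1/t+a+1)$ for some $\gamma>1$, so $\prod_{k\ge 0}K_{q_k}^{\theta^{-k}}$ converges thanks to $\sum_{k\ge 0}k/\theta^k<\infty$, producing the factor $(C_1 a+C_2/t)^{(n+2)/(2p)}$ after absorbing the additive $1$ into $C_2/t$ using $t\le T$. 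Letting $k\to\infty$ and noting $\|f\|_{L^{q_k}([s_k,T]\times M)}\to\sup_{[t,T]\times M}f\ge\sup_M f(\cdot,t)$ delivers the claim.

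The principal obstacle is precisely this bookkeeping: verifying convergence of the iteration product and extracting the correct power of $(a+1/t)$ uniformly in $k$. Two smaller points also need care, namely a routine approximation justifying the weak-form testing for Lipschitz $f$, and the customary self-improvement trick to extend the bound from $p\ge 1$ to all $p>0$ by absorbing $\sup f$ into $\|f\|_{L^p}$.
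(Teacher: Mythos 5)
Your overall strategy (parabolic Moser iteration driven by the evolving Sobolev inequality, followed by the standard self-improvement trick for small $p$) is the same as the paper's, but there is one substantive gap: you run the iteration with the full Sobolev inequality of Theorem~\ref{th1}, keeping both constants $A(t)$ and $B(t)$. Since $A(t)=Ce^{8tB_0/(nA_0)}A_0$ and $B(t)=Ce^{8tB_0/(nA_0)}B_0$, your uniform bounds $A^*,B^*$ necessarily depend on $T$, and the additive term coming from $B^*$ produces the extra ``$+1$'' in each iteration step, which you can only absorb into $C_2/t$ by invoking $t\le T$. Both effects make your final constants depend on $T$, whereas the theorem asserts that $C_1,C_2$ depend only on $n$, $p$, $g_0$, $\phi_0$ and $\lambda_0$. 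The paper avoids this entirely: it notes that $S\ge0$ at the initial time makes $\lambda_0>0$, so Corollary~\ref{co1} applies and yields the uniform Sobolev inequality \eqref{216} with \emph{no} $B$-term and a time-independent constant $A$; the resulting iteration inequality \eqref{32} has no additive constant, and every constant produced is $T$-free. To repair your argument, replace Theorem~\ref{th1} by Corollary~\ref{co1} at the Sobolev step (and, strictly speaking, one should justify the claim that $S\ge0$ initially yields $\lambda_0>0$ rather than merely $\lambda_0\ge0$ --- both your sketch and the paper lean on this positivity).

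A second, smaller point: your iteration starts at $q_0=p$ with test exponent $q\ge\max(p,1)$, but at $q=1$ the dissipative coefficient $4(q-1)/q$ vanishes and the Sobolev step degenerates. The paper starts the Moser iteration at $p_0\ge2$, where the corresponding coefficient is at least $2$, and reaches $0<p<2$ only through the absorption iteration in the time variable (inequality \eqref{34}). Your deferral of small $p$ to that trick is fine in principle, but the threshold must be set at $q_0\ge 2$ (or at least at some fixed $q_0>1$ bounded away from $1$), not at $q_0\ge1$.
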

Obviously, the heat equation (\ref{O2}) is a simple linear parabolic equation and the heat kernel satisfies naturally
the conditions of the above theorem. As a consequence of Theorem \ref{th2}, it is not difficult to get an upper bound of the heat kernel under the
harmonic-Ricci flow, which is similar to the upper bound in Bailesteanu \cite{M}, Bailesteanu and Tran \cite{MH}, and Wang \cite{W}.
But our upper bound depends on the first eigenvalue of $\mathcal{F}$ entropy, which is different from their results.
More precisely,  we prove
\begin{thm}\label{th3}
Assume that ($g(x,t), \phi(x,t)$) is a smooth solution to the harmonic-Ricci flow (\ref{O1}) in $M^n\times[0,T]$ with initial value ($g_0,\phi_0$)
and $S\geq0$ at the initial time. Let $G(x,t;y,s)$ be the heat kernel.
Then there exists a positive constant $C$, which depends on dimension $n$, $g_0$, $\phi_0$ and the first eigenvalue $\lambda_0$ of $\mathcal{F}$ entropy with
respect to $g_0$, such that
\begin{equation*}
G(x,t;y,s)\leq \frac{C}{(t-s)^{\frac{n}{2}}},
\end{equation*}
for $\forall 0\leq s<t\leq T$, and $\forall x,y\in M$.
\end{thm}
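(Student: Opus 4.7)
The plan is to apply Theorem \ref{th2} to the heat kernel itself, regarded as a function of its first pair of variables. Fix $(y,s)$ with $0\leq s<t\leq T$ and set $f(x,\tau):=G(x,\tau;y,s)$. Since $G$ is the fundamental solution of $(\Delta-\partial_\tau)u=0$, the function $f$ is nonnegative and satisfies $\partial_\tau f=\Delta f$, which fits the hypothesis of Theorem \ref{th2} with $a=0$. To align with the set-up of Theorem \ref{th2}, I shift time by $s$: the pair $\bar g(\cdot,\tau):=g(\cdot,\tau+s)$, $\bar\phi(\cdot,\tau):=\phi(\cdot,\tau+s)$ is a smooth harmonic-Ricci flow on $[0,t-s]$ with initial data $(g(\cdot,s),\phi(\cdot,s))$. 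The condition $S\geq 0$ propagates along the flow (the evolution equation of $S$ has nonnegative reaction terms, thanks in particular to $\alpha'(t)\leq 0$, so the maximum principle applies), so $S\geq 0$ at the initial time of the shifted flow as well.

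Applying Theorem \ref{th2} with $p=1$, $a=0$, and terminal time $t-s$, I obtain
\begin{equation*}
\sup_{x\in M}G(x,t;y,s)\leq \left(\frac{C_2}{t-s}\right)^{\frac{n+2}{2}}\int_s^t\int_M G(x,\tau;y,s)\,\md\mu(g(\tau))\,\md\tau.
\end{equation*}
The next step is to show the inner mass is bounded by $1$. Set $m(\tau):=\int_M G(x,\tau;y,s)\,\md\mu(g(\tau))$. Using $\partial_\tau G=\Delta G$ together with $\partial_\tau\md\mu=-S\,\md\mu$ along the flow, the divergence theorem on the closed manifold $M$ yields
\begin{equation*}
m'(\tau)=\int_M(\Delta G-SG)\,\md\mu(g(\tau))=-\int_M SG\,\md\mu(g(\tau))\leq 0,
\end{equation*}
where $S\geq 0$ is used in the last step. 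Combined with the initial value $m(s)=1$ (integral of the $\delta$-measure of $y$), this forces $m(\tau)\leq 1$ for every $\tau\in[s,t]$, so the double integral above is at most $t-s$.

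Substituting back gives
\begin{equation*}
\sup_{x\in M}G(x,t;y,s)\leq \left(\frac{C_2}{t-s}\right)^{\frac{n+2}{2}}(t-s)=\frac{C}{(t-s)^{n/2}},
\end{equation*}
with $C:=C_2^{(n+2)/2}$, which is the claim. The main conceptual obstacle is ensuring that the constant $C_2$ inherited from Theorem \ref{th2} applied to the \emph{shifted} flow is controlled by the original data $(n,g_0,\phi_0,\lambda_0)$ rather than by quantities at the base time $s$. This is precisely what Theorem \ref{th1} provides: the Sobolev constants propagate along the harmonic-Ricci flow with bounds depending only on the initial data, and the monotonicity of the first eigenvalue of the $\mathcal{F}$-entropy furnishes a uniform control over all shifted flows, so $C_2$, and hence $C$, is independent of $s$.
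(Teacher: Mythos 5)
Your proposal is correct and follows essentially the same route as the paper: the authors package the time-shifted application of Theorem \ref{th2} (with $a=0$, $p=1$) as Corollary \ref{co2}, prove $\int_M G(x,\tau;y,s)\,\md\mu(x,\tau)\le 1$ by exactly your computation $\partial_\tau\int_M G\,\md\mu=-\int_M SG\,\md\mu\le 0$, and conclude identically. Your explicit remarks on the propagation of $S\ge 0$ and on the uniformity of the constants over shifted flows (via monotonicity of the first eigenvalue of the $\mathcal{F}$ entropy) are correct and merely make explicit what the paper leaves implicit.
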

\begin{rem} Here the nonnegativity of $S$ in our theorem is a little weaker than the positivity in Bailesteanu and Tran \cite{MH}.
Indeed, the assumption can be taken away if we impose other reliance of constant $C$ on the upper bound of time (see Corollary \ref{co4}).
\end{rem}
The rest of the paper is organized as follows. In section 2 we consider the $\mathcal{W}$ entropy under the harmonic-Ricci flow and derive the Sobolev inequality by using the monotonicity of $\mathcal{W}$ entropy. In section 3 we show the linear parabolic estimate for (\ref{O3}) and the upper bound estimate of the heat kernel along the harmonic-Ricci flow.

\section{ Sobolev inequalities under the harmonic-Ricci flow}
\setcounter{equation}{0}
In this section, we show a uniform log-Sobolev inequality along the harmonic-Ricci flow from the monotonicity of $\mathcal{W}$ entropy, and
then verify the equivalence of our Sobolev inequality and the uniform log-Sobolev inequality, from which we prove Theorem \ref{th1}.
As a corollary, we obtain the other uniform Sobolev inequality depending on the first eigenvalue of $\mathcal{F}$ entropy,
which will be used in the next section.

First let us introduce the definition of $\mathcal{W}$ entropy via corresponding conjugate heat equation just as Perelman \cite{P1} has done
in Ricci flow. Let $u(x,t)$ be a positive solution to the following conjugate heat equation
\begin{equation}\label{21}
\Delta u-Su+\partial_t u=0.
\end{equation}
From the conjugate heat equation (\ref{21}) and the harmonic-Ricci flow (\ref{O1}), we can get easily
$$
\frac{\md}{\md t}\int_Mu(x,t)\md\mu(g(t))=\int_M(\partial_t-S)u\md\mu(g(t))=-\int_M\Delta u\md\mu(g(t))=0,
$$
where we used the fact that $M$ is closed. Therefore, without loss of generality we assume that $u(x,t)$ satisfies
\begin{equation*}
\int_Mu(x,t)\md\mu(g(t))=1
\end{equation*}
for any $t\in[0,T]$. The $\mathcal{W}$ entropy is given by a functional of the positive solution $u$ of (\ref{21}) as follows.
\begin{defn}
The $\mathcal{W}$ entropy is defined as the following functional
\begin{equation*}
\mathcal{W}(g,f,\tau)=\int_M\big(\tau(S+|\nabla f|^2)+f-n\big)u\md\mu(g(t)),
\end{equation*}
where $f=-\ln u-\frac{n}{2}(\ln 4\pi\tau)$ and $\tau$ is a scaling factor satisfied $\frac{\md\tau}{\md t}=-1$.
\end{defn}

\begin{rem} The same definition can also be found in \cite{BL,MR}.
From the relationship between $f$ and $u$, $\mathcal{W}$ entropy can also be rewritten by the function $u$ directly as follow.
\begin{equation}
\mathcal{W}(g,u,\tau):=\int_M\left[\tau\lf(Su+\frac{|\nabla u|^2}{u}\rt)-u\ln u-\frac{n}{2}\ln( 4\pi\tau)u -nu \right]\md\mu(g(t)).\label{22}
\end{equation}
\end{rem}
Now let us recall the following monotonicity formula, which had been proved in Theorem 5.2 of \cite{F4} for general geometric flow and Proposition 7.1 of \cite{MR}(or Theorem 6.1 of \cite{BL}) for the case of constant $\alpha$. We omit the details here.
\begin{prop}\label{p1}
 Let ($g(x,t), \phi(x,t)$) be a solution of the harmonic-Ricci flow (\ref{O1}) and $u(x,t)$
be a positive solution of (\ref{21}). Then $\mathcal{W}$ entropy is non-decreasing in $t$. More precisely,
\begin{equation*}
\frac{\md}{\md t}\mathcal{W}=\int_M\left(2\tau|S_y+Hess(f)-\frac{g}{2\tau}|^2+2\tau\alpha\left\lvert \tau_g \phi-\langle\nabla\phi, \nabla f\rangle\right\lvert^2-\tau\dot{\alpha}|\nabla\phi|^2\right)u\md\mu(g(t))
\ge0.
\end{equation*}
\end{prop}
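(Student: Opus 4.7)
The plan is to adapt Perelman's monotonicity calculation for the $\mathcal{W}$ entropy of the Ricci flow to the coupled system (\ref{O1}); since the result is already recorded in \cite{MR,BL,F4}, I would only sketch the derivation. The first step is to rewrite the conjugate heat equation $\Delta u - Su + \partial_t u = 0$ in terms of $f = -\ln u - \frac{n}{2}\ln(4\pi\tau)$, which a direct substitution shows is equivalent to the ``backward'' equation
\begin{equation*}
\partial_t f = -\Delta f + |\nabla f|^2 - S + \frac{n}{2\tau}.
\end{equation*}
This is the evolution that will interact with every curvature term below.

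The second step is a convenient reformulation of $\mathcal{W}$. Since $M$ is closed and $u\nabla f = -\nabla u$, integration by parts gives $\int_M |\nabla f|^2 u\,\md\mu = \int_M (2\Delta f - |\nabla f|^2)u\,\md\mu$, so that
\begin{equation*}
\mathcal{W}(g,f,\tau) = \int_M \bigl[\tau(2\Delta f - |\nabla f|^2 + S) + f - n\bigr] u\,\md\mu.
\end{equation*}
I would then differentiate under the integral sign, using three basic evolution equations along (\ref{O1}): the volume form evolves by $\tfrac{\md}{\md t}\md\mu = -S\,\md\mu$ (immediate from $\mathrm{tr}_g(\partial_t g) = -2S$); the scalar evolves by
\begin{equation*}
\partial_t S = \Delta S + 2|S_{ij}|^2 + 2\alpha|\tau_g\phi|^2 - \dot\alpha|\nabla\phi|^2,
\end{equation*}
which one obtains from the usual derivation of $\partial_t R$ under Ricci flow, corrected by the $\alpha\nabla\phi\otimes\nabla\phi$ contribution and the map heat equation $\partial_t\phi = \tau_g\phi$; and finally the commutator $(\partial_t - \Delta)(\Delta f)$ generated by $\partial_t g_{ij} = -2S_{ij}$.

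The main computational step is the collection of terms. After integrating by parts against $u$ (which the conjugate heat equation makes well-behaved, since its formal adjoint annihilates constants), the Laplacian-type contributions cancel and the remainder regroups cleanly, yielding
\begin{equation*}
\frac{\md\mathcal{W}}{\md t} = \int_M\!\Bigl[2\tau\bigl|S_{ij} + \nabla_i\nabla_j f - \tfrac{g_{ij}}{2\tau}\bigr|^2 + 2\tau\alpha\bigl|\tau_g\phi - \langle\nabla\phi,\nabla f\rangle\bigr|^2 - \tau\dot\alpha|\nabla\phi|^2\Bigr]u\,\md\mu,
\end{equation*}
which is nonnegative because $\alpha$ is non-increasing.

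The main obstacle is the algebraic bookkeeping needed to group all cross terms into the two completed squares above. The essential input is the contracted second Bianchi-type identity for $S_{ij}$, a variant of $\nabla^i R_{ij} = \tfrac{1}{2}\nabla_j R$ that picks up a $\tau_g\phi\,\nabla_j\phi$ defect coming from the $\alpha\nabla_i\phi\nabla_j\phi$ correction; this is precisely what converts integrals of the form $\int_M \nabla^i\nabla^j f\cdot S_{ij}\,u\,\md\mu$ into the shape compatible with the soliton-type square and simultaneously generates the map-heat-flow square. Once this identity is in hand, the remaining regrouping is purely algebraic, and the $-\tau\dot\alpha|\nabla\phi|^2$ term appears as the sole leftover contribution from the $\alpha$-dependence of $S$.
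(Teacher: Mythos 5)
The paper itself does not prove Proposition \ref{p1}: it explicitly ``omits the details'' and refers the reader to Theorem 5.2 of \cite{F4}, Proposition 7.1 of \cite{MR}, and Theorem 6.1 of \cite{BL}. Your sketch is therefore not competing with an argument in the paper but with the computation in those references, and it correctly reproduces its skeleton: the backward equation $\partial_t f=-\Delta f+|\nabla f|^2-S+\frac{n}{2\tau}$ is the right translation of (\ref{21}) under $u=(4\pi\tau)^{-n/2}e^{-f}$ with $\dot\tau=-1$; the identity $\int_M|\nabla f|^2u\,\md\mu=\int_M\Delta f\,u\,\md\mu$ justifies your rewriting of $\mathcal{W}$; the evolution $\partial_t S=\Delta S+2|S_{ij}|^2+2\alpha|\tau_g\phi|^2-\dot\alpha|\nabla\phi|^2$ and the twisted contracted Bianchi identity $\nabla^iS_{ij}=\tfrac12\nabla_jS-\alpha\,\tau_g\phi\,\nabla_j\phi$ are exactly the two nontrivial inputs that M\"uller uses, and you correctly locate the source of both completed squares and of the leftover $-\tau\dot\alpha|\nabla\phi|^2$ term (whose sign, together with $\alpha$ non-increasing and $\tau>0$, gives the monotonicity). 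The one caveat is that your proposal defers precisely the step where errors typically hide: the ``purely algebraic'' regrouping of the cross terms into $2\tau|S_{ij}+\nabla_i\nabla_jf-\frac{g_{ij}}{2\tau}|^2$ and $2\tau\alpha|\tau_g\phi-\langle\nabla\phi,\nabla f\rangle|^2$ is asserted rather than carried out, so as written this is a correct and well-aimed proof plan rather than a complete proof. Given that the paper handles the matter by citation, that level of detail is arguably adequate here, but if you intend it as a self-contained argument you should either execute the cancellation or, like the authors, cite \cite{MR} for it.
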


To prove Theorem \ref{th1}, we first need to prove the corresponding log-Sobolev inequality for any $t\in[0,T)$. Here we use the same method as Zhang
\cite{Z2} and Liu-Wang \cite{LW}. Using the monotonicity of $\mathcal{W}$ entropy, we have the following log-Sobolev inequality.
\begin{lem}[Log-Sobolev Inequality]\label{lm1}
Under the same assumptions of Theorem \ref{th1}. Then for any $t\in[0,T)$, $v\in W^{1,2}(M,g(t))$ with $\int_M v^2\md\mu(g(t))=1$ and any $\epsilon>0$,
we have
\begin{align*}
\int_M v^2\ln v^2\md\mu(g(t))\leq\epsilon^2\int_M\big(4|\nabla v|^2+Sv^2\big)\md\mu(g(t))-n\ln(2\epsilon)
+4(t+\epsilon^2)\frac{B_0}{A_0}+\frac{n}{2}\ln\frac{nA_0}{2e}.
\end{align*}
\end{lem}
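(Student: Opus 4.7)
The plan is to recast the log-Sobolev inequality in terms of Perelman's $\mathcal{W}$ entropy and then reduce the estimate at time $t$ to one at time $0$ via the monotonicity in Proposition \ref{p1}. Substituting $u=v^2$ into the representation (\ref{22}) gives the identity
\begin{equation*}
\mathcal{W}(g(t), v^2, \tau) = \tau\int_M \lf(4|\nabla v|^2+Sv^2\rt)\md\mu(g(t)) - \int_M v^2\ln v^2\,\md\mu(g(t)) - \tfrac{n}{2}\ln(4\pi\tau) - n,
\end{equation*}
so the desired inequality (at parameter $\epsilon$) is equivalent to a lower bound on $\mathcal{W}(g(t), v^2, \epsilon^2)$.

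First I would prove the time-$0$ version of the lemma directly from the hypothesized $L^2$ Sobolev inequality. For $w\in W^{1,2}(M,g_0)$ with $\int_M w^2\md\mu(g_0)=1$, Jensen's inequality applied to the probability measure $w^2\md\mu(g_0)$ and the function $w^{4/(n-2)}$ yields $\int_M w^2\ln w^2\md\mu(g_0) \leq n\ln\|w\|_{L^{2n/(n-2)}(g_0)}$. Inserting the Sobolev hypothesis and then applying the elementary bound $\ln y\leq \delta y-1-\ln\delta$ with $\delta=8\sigma/n$ produces, after routine rearrangement, the inequality
\begin{equation*}
\int_M w^2\ln w^2\,\md\mu(g_0) \leq \sigma\int_M\lf(4|\nabla w|^2+Sw^2\rt)\md\mu(g_0) + \frac{4\sigma B_0}{A_0} + \frac{n}{2}\ln\frac{nA_0}{8e\sigma}
\end{equation*}
valid for every $\sigma>0$. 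Taking $\sigma=\epsilon^2$ reproduces the claimed lemma when $t=0$.

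For general $t\in(0,T)$, given $v$ with $\int_M v^2\md\mu(g(t))=1$, I would solve the conjugate heat equation (\ref{21}) backward from time $t$ with terminal value $u(\cdot,t)=v^2$ (approximating $v^2$ first by smooth strictly positive data and taking a limit at the end). Backward in $s$ the equation is parabolic, so $u(\cdot,s)$ is smooth and strictly positive for $s<t$, and the normalization $\int_M u(\cdot,s)\md\mu(g(s))=1$ is preserved. Setting $\tau(s)=\epsilon^2+(t-s)$, Proposition \ref{p1} gives $\mathcal{W}(g(t),v^2,\epsilon^2) \geq \mathcal{W}(g_0,u_0,t+\epsilon^2)$. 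Writing $w=\sqrt{u_0}$ and applying the time-$0$ log-Sobolev of the previous step with $\sigma=t+\epsilon^2$, the identity above at time $0$ yields
\begin{equation*}
\mathcal{W}(g_0,u_0,t+\epsilon^2) \geq -\frac{4(t+\epsilon^2)B_0}{A_0} - \frac{n}{2}\ln\frac{n\pi A_0}{2e} - n.
\end{equation*}
Substituting this lower bound into the identity at time $t$ and using $\ln(4\pi\epsilon^2)+\ln\frac{n\pi A_0/(2e)}{1}$ — the factors of $\pi$ cancel — produces exactly the claimed bound $-n\ln(2\epsilon)+4(t+\epsilon^2)B_0/A_0+\tfrac{n}{2}\ln(nA_0/(2e))$ on the right-hand side.

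The only delicate point I anticipate is the approximation step used to justify solving the conjugate heat equation backward starting from the possibly non-smooth, non-strictly-positive datum $v^2$, and confirming that both the monotonicity inequality and the initial-time log-Sobolev estimate (with $w=\sqrt{u_0}$, which requires $u_0\geq 0$ and $w\in W^{1,2}$) pass to the limit. The remainder is bookkeeping: a single application of Jensen's inequality, the concavity bound for $\ln$, and the $\mathcal{W}$-monotonicity already supplied by Proposition \ref{p1}.
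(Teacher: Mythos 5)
Your proposal is correct and follows essentially the same route as the paper: Jensen's inequality plus the hypothesized Sobolev inequality (with the elementary bound $\ln z\leq yz-\ln y-1$) to get the time-zero log-Sobolev estimate, and the monotonicity of the $\mathcal{W}$ entropy along the backward conjugate heat equation to transport it to time $t$; your arithmetic, including the cancellation of the factors of $\pi$, checks out against the paper's computation (2.5) and the chain (2.3)--(2.4). The only difference is cosmetic: the paper flows the smooth positive minimizer of $\mathcal{W}(g(t_0),\cdot,\epsilon^2)$ backward and compares infima (citing the existence of a minimizer), whereas you flow the given datum $v^2$ backward directly, which trades that existence result for the approximation argument you already flag.
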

\begin{proof}
For any fixed $t_0\in[0,T)$ and any $\epsilon>0$, we set $\tau(t)=\epsilon^2+t_0-t.$
From the monotonicity of the $\mathcal{W}$ entropy in Proposition \ref{p1}, we get
\begin{align}
\nonumber\inf_{\int_M u_0\md\mu(g_0)=1}\mathcal{W}(g_0,f_0,t_0+\epsilon^2)\leq &\mathcal{W}(g_0,\widetilde{f}(\cdot,0),t_0+\epsilon^2)\\
\leq&\mathcal{W}(g(t_0),\widetilde{f}(\cdot,t_0),\epsilon^2)\nonumber\\
=&\inf_{\int_M u \md\mu(g(t_0))=1}\mathcal{W}(g(t_0),f,\epsilon^2),
\label{23}
\end{align}
where $(4\pi\tau)^{-\frac{n}{2}}e^{-\widetilde{f}(\cdot,t)}$ satisfies the conjugate heat equation (\ref{21}), $f_0$ and $f$ are given by the formulas
$u_0=\big(4\pi(t_0+\epsilon^2)\big)^{-\frac{n}{2}}e^{-f_0}$ and $u=(4\pi\epsilon^2)^{-\frac{n}{2}}e^{-f}$.
The last equality holds because the infimum of $\mathcal{W}$ entropy is achieved by a minimizer $\widetilde{f}(\cdot,t_0)$
(cf. Corollary 1.5.9 in \cite{CZ} or section 3 in \cite{P1}). Using (\ref{22}) we  rewrite (\ref{23}) as
\begin{align*}
\inf_{\int u_0\md\mu(g_0)=1}\int_M\left[(\epsilon^2+t_0)(S+|\nabla
\ln u_0|^2)-\ln u_0
-\frac{n}{2}\ln\lf(4\pi(t_0+\epsilon^2)\rt)\right]u_0\md\mu(g_0)\\
\leq\inf_{\int u\md\mu(g(t_0))=1}\int_M\left[\epsilon^2\lf(S+|\nabla \ln u|^2\rt)-\ln u-\frac{n}{2}\ln\lf(4\pi\epsilon^2\rt)\right]u\md\mu(g(t_0)).
\end{align*}
Let $v=\sqrt{u}$ and $v_0=\sqrt{u_0}$, the above inequality leads to
\begin{align}
\inf_{\int v_0^2\md\mu(g_0)=1}\int_M\left[(\epsilon^2+t_0)(Sv_0^2+4|\nabla
v_0|^2)-v_0^2\ln v_0^2\right]\md\mu(g_0)-\frac{n}{2}\ln(t_0+\epsilon^2)\nonumber\\
\leq\inf_{\int v^2\md\mu(g(t_0))=1}\int_M\big[\epsilon^2(Sv^2+4|\nabla v|^2)-v^2\ln v^2\big]\md\mu(g(t_0))-\frac{n}{2}\ln\epsilon^2.
\label{24}
\end{align}
Notice that $\ln x$ is a concave function and $\int v_0^2\md\mu(g_0)=1$, thus applying Jensen's inequality we deduce
\[\int_Mv_0^2\ln v_0^{q-2}\md\mu(g_0)\leq\ln \int v_0^{q-2}v_0^2\md\mu(g_0),\]
where $q=\frac{2n}{n-2}$. This means
$$
\int_Mv_0^2\ln v_0^2\md\mu(g_0)\leq\frac{n}{2}\ln\|v_0\|_q^2,
$$
By the assumption that the Sobolev inequality holds for the initial time $t=0$, combining with the above inequality we have
\[\int_Mv_0^2\ln v_0^2\md\mu(g_0)\leq\frac{n}{2}\ln\left(A_0\int_M\lf(|\nabla v_0|^2+\frac{1}{4}Sv_0^2\rt)\md\mu(g_0)+B_0\right).\]
Moreover, the inequality $\ln z\leq yz-\ln y-1$ holds for any $y,z>0$. Using it in the RHS of the above we arrive at
\begin{equation*}
\int_Mv_0^2\ln v_0^2\md\mu(g_0)\leq\frac{n}{2}y\lf(A_0\int_M\lf(|\nabla v_0|^2+\frac{1}{4}Sv_0^2\rt)\md\mu(g_0)+B_0\rt)-\frac{n}{2}\ln y-\frac{n}{2}.
\end{equation*}
Now we choose $y=\frac{8(t_0+\epsilon^2)}{n A_0}$,
then the above inequality implies
\begin{align}\label{25}
\int_Mv_0^2\ln v_0^2\md\mu(g_0)\leq&(t_0+\epsilon^2)\int_M\lf(4|\nabla v_0|^2+Sv_0^2\rt)\md\mu(g_0)
+\frac{4(t_0+\epsilon^2)B_0}{A_0}\nonumber\\
&-\frac{n}{2}\ln\frac{8(t_0+\epsilon^2)}{nA_0}-\frac{n}{2}.
\end{align}
Substituting (\ref{25}) into (\ref{24}), we conclude that
\begin{align*}
\int_M v^2\ln v^2\md\mu(g(t_0))\leq&\epsilon^2\int_M\lf(4|\nabla v|^2+Sv^2\rt)\md\mu(g(t_0))-n\ln(2\epsilon)\\
&+\frac{4(t_0+\epsilon^2)B_0}{A_0}+\frac{n}{2}\ln\frac{nA_0}{2e}.
\end{align*}
The time $t_0$ is arbitrary, thus the proof of the lemma is completed now.
\end{proof}
In general, the log-Sobolev inequality and the Sobolev inequality are equivalent, which can be proved via the upper bound of heat kernel. More
details can be found in Zhang's Theorem 4.2.1 of \cite{Z3}. But the Sobolev inequality along a geometric flow is different from the general Sobolev
inequality in closed manifolds. So it is necessary to provide the equivalence between our log-Sobolev inequality and Sobolev
inequality here. We can give a proof of the following equivalence lemma by the trick in \cite{BCLS}.
\begin{lem}\label{lm2}
Let $(M^n, g)$ be a  closed Riemannian manifold ($n\geq3$). Then the following inequalities are equivalent up to constants.\\
(I) Sobolev inequality: there exist positive constants $A$ and $B$ such that, for all $v\in W^{1,2}(M)$,
\begin{equation*}
\lf(\int_M v^{\frac{2n}{n-2}}\md\mu\rt)^{\frac{n-2}{n}}\leq A\int_M \lf(|\nabla v|^2+\frac{1}{4}Sv^2\rt)\md\mu+B\int_Mv^2\md\mu;
\end{equation*}
(II) Log-Sobolev inequality: for all $v\in W^{1,2}(M)$ such that $\|v\|_2=1$ and all $\epsilon >0$,
\begin{equation*}
\int_Mv^2\ln v^2 \md\mu\leq \epsilon^2\int_M\lf(|\nabla v|^2+\frac{1}{4}Sv^2\rt)\md\mu-\frac{n}{2}\ln\epsilon^2+BA^{-1}\epsilon^2+\frac{n}{2}\ln\frac{nA}{2e}.
\end{equation*}
\end{lem}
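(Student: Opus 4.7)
The direction (I)$\Rightarrow$(II) is a verbatim copy of the Jensen-inequality step already executed in the proof of Lemma~\ref{lm1}. Since $\|v\|_2=1$, the measure $v^2\,\md\mu$ is a probability measure on $M$, so concavity of $\ln$ combined with Jensen's inequality applied to $v^{q-2}$ with $q=\tfrac{2n}{n-2}$ yields, using the arithmetic identity $2q/(q-2)=n$,
\begin{equation*}
\int_M v^2\ln v^2\,\md\mu \;\le\; \tfrac{n}{2}\ln\|v\|_q^2.
\end{equation*}
Insert the Sobolev inequality (I) into the right-hand side, then linearize the logarithm through the elementary bound $\ln z\le yz-\ln y-1$ with the choice $y=\tfrac{2\epsilon^2}{nA}$. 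Collecting the three resulting terms reproduces (II) exactly in the stated form; this parallels the passage from (\ref{25}) back to the previous displayed line in the proof of Lemma~\ref{lm1}.

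For the converse (II)$\Rightarrow$(I), I would adapt the semigroup strategy of \cite{BCLS}. Consider the Schr\"odinger-type operator $L:=-\Delta+\tfrac{1}{4}S$ on $M$ and the associated self-adjoint heat semigroup $P_t=e^{-tL}$. For a nonnegative $f\in L^2$ with $\|f\|_2=1$, a direct integration by parts gives the entropy-dissipation identity
\begin{equation*}
\frac{\md}{\md t}\int_M (P_tf)^2\ln(P_tf)^2\,\md\mu \;=\; -4\int_M\Bigl(|\nabla P_tf|^2 + \tfrac{1}{4}S(P_tf)^2\Bigr)\md\mu.
\end{equation*}
Feed (II) applied to $v=P_tf$ into this identity, choosing $\epsilon=\epsilon(t)$ optimally along Nelson's hypercontractivity curve, and run the standard Davies--Gross ODE comparison. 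This produces the ultracontractive bound $\|P_t\|_{2\to\infty}\le Ct^{-n/4}$ for $0<t\le 1$, with $C$ depending only on $A$, $B$, $n$. By the Varopoulos equivalence, such an ultracontractive estimate for a self-adjoint semigroup is in turn equivalent to the Sobolev inequality (I), with constants admissibly related to $A$ and $B$.

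The main obstacle lies in the converse: one must carry the potential $\tfrac{1}{4}S$, which has no assumed sign or bound, through the semigroup calculation; and one must handle the additive term $BA^{-1}\epsilon^2$ in (II) -- absent from the classical Euclidean Gross inequality -- so that its presence does not destroy the $t^{-n/4}$ ultracontractive rate, only the multiplicative constant. Once this bookkeeping of the potential and the additive constant is arranged, the recovery of (I) from the resulting heat-kernel upper bound is routine.
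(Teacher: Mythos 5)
Your direction (I)$\Rightarrow$(II) coincides with the paper's argument (and with the derivation of (\ref{25}) inside the proof of Lemma \ref{lm1}): Jensen's inequality gives $\int_M v^2\ln v^2\,\md\mu\leq\frac{n}{2}\ln\|v\|_q^2$, one inserts the Sobolev inequality, and linearizes via $\ln z\leq yz-\ln y-1$ with $y=\frac{2\epsilon^2}{nA}$. Nothing to add there.

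For (II)$\Rightarrow$(I) you propose the semigroup route (Gross--Davies ultracontractivity, then the Varopoulos equivalence), whereas the paper deliberately avoids semigroups and instead runs the truncation argument of \cite{BCLS}: minimizing the right-hand side of (II) over $\epsilon$ yields the entropy--energy inequality (\ref{26}), hence the interpolation $\|f\|_2\leq W(f)^{\theta}\|f\|_s^{1-\theta}$; then the slicing $f_k=\min\{(f-2^k)^{+},2^k\}$ with $a_k=2^{qk}\mu\{f\geq 2^k\}$ is summed over $k\in\mathbb{Z}$, using the subadditivity $\sum_k W(f_k)^2\leq W(f)^2$ (after shifting $S$ by a constant $S_0$ so that $\frac{A}{4}S+B\geq0$) together with the spectral bound $\int_M f^2\md\mu\leq\lambda_1^{-1}W(f)^2$, which is itself extracted from (\ref{26}). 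This is entirely elementary and never requires the operator $-\Delta+\frac{1}{4}S$ to generate a well-behaved semigroup, which is precisely why the paper chose it.

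As written, your converse has a genuine gap rather than deferred bookkeeping. The claimed entropy-dissipation identity is false: with $u=P_tf$ and $\partial_t u=\Delta u-\frac{1}{4}Su$ one gets
\begin{equation*}
\frac{\md}{\md t}\int_M u^2\ln u^2\,\md\mu=-\int_M\bigl(2\ln u^2+6\bigr)|\nabla u|^2\,\md\mu-\frac{1}{2}\int_M Su^2\bigl(\ln u^2+1\bigr)\md\mu,
\end{equation*}
so the logarithmic weights do not cancel and the right-hand side is not $-4\int_M(|\nabla u|^2+\frac{1}{4}Su^2)\md\mu$. The actual Gross mechanism differentiates $\|P_tf\|_{p(t)}$ along an increasing exponent $p(t)$ -- that is where the entropy and the Dirichlet form of $u^{p/2}$ both appear -- and one must also account for the fact that $\|P_tf\|_2$ is not conserved, so (II) cannot be applied to $v=P_tf$ without renormalizing. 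Finally, the appeal to the Varopoulos equivalence presupposes a submarkovian ($L^\infty$-contractive) semigroup, which fails for $e^{-t(-\Delta+\frac{1}{4}S)}$ when $S$ has no sign; one would have to pass to the shifted nonnegative operator $L+BA^{-1}$ and redo the $L^1\to L^\infty$ and heat-kernel-to-Sobolev implications by hand (essentially Theorem 4.2.1 of \cite{Z3}). These are exactly the difficulties the paper's \cite{BCLS}-style proof sidesteps, so to complete your route you must either supply the corrected Gross/Davies computation with the shifted operator, or switch to the truncation argument.
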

\begin{proof}$I \Rightarrow II:$ The proof is a standard application of the Jensen's inequality. The derivation is almost same with (\ref{25}).
We only need to take $y=\frac{2\epsilon^2}{n A}$ instead, the log-Sobolev inequality will be obtained as desired.

$II \Rightarrow I:$ Notice that the LHS of log-Sobolev inequality is bounded from below for all $v\in W^{1,2}(M)$. Hence,
the log-Sobolev inequality implies directly that for all $v\in W^{1,2}(M)$
$$
A\int_M\lf(|\nabla v|^2+\frac{1}{4}Sv^2\rt)\md\mu+B>0.
$$
 Since the log-Sobolev inequality holds for all $\epsilon>0$, the RHS of log-Sobolev inequality can be seen as a function of
$\epsilon$ and reaches its  minimum. Thus we have
\begin{equation}\label{26}
\int_Mv^2\ln v^2 \md\mu\leq \frac{n}{2}\ln \lf[A\int_M\lf(|\nabla v|^2+\frac{1}{4}Sv^2\rt)\md\mu+B\rt],
\end{equation}
for all $v\in W^{1,2}(M)$ such that $\|v\|_2=1$. Now we consider any function $f\in W^{1,2}(M)$.
By the Kato's inequality $|\nabla|f||\leq|\nabla f|$,
we only need to prove the Sobolev inequality for all nonnegative functions. So we assume that $f\geq0$.
For the sake of conveniences, we denote
$$
W(f)=\lf[A\int_M\lf(|\nabla f|^2+\frac{1}{4}Sf^2\rt)\md\mu+B\int_Mf^2\md\mu\rt]^{\frac{1}{2}}.
$$
Taking $v=\frac{f}{\|f\|_2}$ in (\ref{26}) yields
\begin{equation*}
\int_Mf^2\ln \lf(\frac{f}{\|f\|_2}\rt)^2 \md\mu\leq n\|f\|_2^2\ln \lf(\frac{W(f)}{\|f\|_2}\rt).
\end{equation*}
It is just $(LS_2^q)$ in Page 1067 of \cite{BCLS}, where $q=\frac{2n}{n-2}$. Using their method to treat the above estimate, we arrive at
\begin{equation}\label{27}
\|f\|_2 \leq W(f)^{\theta} \|f\|_s^{1-\theta},
\end{equation}
where $0<s<2$ and $\frac{1}{2}=\frac{\theta}{q}+\frac{1-\theta}{s}$.
Next we need to define a family of functions $f_k$ for $k\in\mathbb{Z}$ by
$$
f_k=\min\{(f-2^k)^+, 2^k\},
$$
where $(f-2^k)^+=\max\{f-2^k,0\}$. From the definition of $f_k$ it is obvious to have the following estimate for any $p>0$
\begin{equation}\label{28}
2^{pk}\mu\{f\geq2^{k+1}\}\leq\int_Mf_k^p\md\mu\leq2^{pk}\mu\{f\geq2^{k}\}.
\end{equation}
Set $a_k=2^{qk}\mu\{f\geq2^{k}\}$. Combining (\ref{27}) with (\ref{28}), we derive
\begin{align*}
a_{k+1}\leq& 2^{q(k+1)-2k}\int_Mf_k^2\md\mu\\
\leq& 2^{q(k+1)-2k}W(f_k)^{2\theta}\|f_k\|_s^{2(1-\theta)}\\
\leq& 2^{q}W(f_{k})^{2\theta }a_{k}^{\frac{2(1-\theta)}{s}}.
\end{align*}
Consequently, by the H\"older inequality we have
\begin{align*}
\sum\limits_{k\in\mathbb{Z}}a_{k}
=&\sum\limits_{k\in\mathbb{Z}}a_{k+1}\\
\leq&\sum\limits_{k\in\mathbb{Z}}2^{q}W(f_{k})^{2\theta }a_{k}^{\frac{2(1-\theta)}{s}}\\
\leq&2^{q}\lf(\sum\limits_{k\in\mathbb{Z}}W(f_{k})^{2}\rt)^{\theta }
\lf(\sum\limits_{k\in\mathbb{Z}}a_{k}^{\frac{2 }{s}}\rt)^{1-\theta }\\
\leq&2^{q}
\lf(\sum\limits_{k\in\mathbb{Z}}W(f_{k})^{2}\rt)^{\theta }
\lf(\sum\limits_{k\in\mathbb{Z}}a_{k}\rt)^{\frac{2(1-\theta)}{s}},
\end{align*}
where the last inequality follows from $0<s<2$. This leads to
\begin{equation}\label{29}
\sum\limits_{k\in\mathbb{Z}}a_{k}\leq 2^{\frac{q(q-s)}{2-s}}\lf(\sum\limits_{k\in\mathbb{Z}}W(f_{k})^{2}\rt)^{\frac{q}{2}}.
\end{equation}
Moreover, it follows from the definition of $a_k$ that
\begin{align}\label{210}
\int_M f^{q}\md\mu
=&\sum\limits_{k\in\mathbb{Z}}\int_{2^{k}\leq f\leq 2^{k+1}}f^q\md\mu\nonumber\\
\leq&\sum\limits_{k\in\mathbb{Z}}2^{q(k+1)}\bigg(\mu(f\geq 2^{k})-\mu(f\geq 2^{k+1})\bigg)\nonumber\\
=&(2^{q}-1)\sum\limits_{k\in\mathbb{Z}}a_{k}.
\end{align}
Now the rest of proof is only need to control the term $W(f_k)$ in (\ref{29}). We have the following key estimate.

{\sc Claim}. If $\frac{A}{4}S+B\geq0$, then for any $0\leq f\in W^{1,2}(M)$ we have
$$
\sum\limits_{k\in\mathbb{Z}}W(f_{k})^{2}
\leq W(f)^2.
$$
Firstly, we note that
\begin{align*}
\int_{M}|f_{k}|^{2}\dmu
=&2\int_{0}^{2^{k+1}-2^{k}} t^{p-1}\mu(f-2^{k}\geq t)\md t\nonumber\\
=&2\int_{2^{k}}^{2^{k+1}}(s-2^{k})^{p-1}\mu(f\geq s)\md s.
\end{align*}
Thus we have
\begin{align*}
\sum\limits_{k\in\mathbb{Z}}\int_{M}|f_{k}|^{2}\dmu
=&2\sum\limits_{k\in\mathbb{Z}}\int_{2^{k}}^{2^{k+1}}(s-2^{k})\mu(f\geq s)\md s\\
=&\sup\limits_{k\in\mathbb{Z}}\lf\{\sup\limits_{s\in[2^{k},\,2^{k+1}]}\frac{ s-2^{k} }{s}\rt\}
\lf(2\sum\limits_{k\in\mathbb{Z}}\int_{2^{k}}^{2^{k+1}}s\mu(f\geq s)\md s\rt)\\
\leq &\frac{ 1 }{2}\int_{M} f ^2\dmu.
\end{align*}
Because of the assumption that
$\frac{A}{4}S+B\geq0$, we can denote $(\frac{A}{4}S+B)\dmu$ as a new measure. By the same method as above we can derive
\begin{equation}\label{211}
\sum\limits_{k\in\mathbb{Z}}\int_{M}\lf(\frac{A}{4}S+B\rt) f_{k} ^{2}\dmu \leq  \frac{1 }{2}\int_{M}\lf(\frac{A}{4}S+B\rt) f ^2\dmu.
\end{equation}
In addition, it also holds that
\begin{align}\label{212}
\sum\limits_{k\in\mathbb{Z}}\int_{M}|\n f_{k}|^{2}\dmu
=&\sum\limits_{k\in\mathbb{Z}}\int_{2^{k}\leq  f \leq 2^{k+1}}|\n f  |^{2}\dmu\nonumber\\
=& \int_{M}|\n f  |^{2}\dmu.
\end{align}
Combining (\ref{211}) with (\ref{212}) yields
\begin{align*}
\sum\limits_{k\in\mathbb{Z}}W(f_{k})^{2}
=&\sum\limits_{k\in\mathbb{Z}}\lf(A\int_{M}\lf(|\n f_{k}|^2+\frac{1}{4}S f_{k} ^{2}\rt)\md \mu+B \int_{M} f_{k} ^{2}\md \mu\rt)\\
\leq& A\sum\limits_{k\in\mathbb{Z}}\int_{M} |\n f_{k}|^2\md \mu
+ \sum\limits_{k\in\mathbb{Z}}\int_{M} \lf(\frac{A}{4}S+B\rt)f_{k} ^{2}\md \mu\\
\leq & A\int_{M} |\n f |^2\md \mu+ \frac{1}{2}\int_{M}\lf(\frac{A}{4}S +B\rt)f^{2}\md \mu\\
\leq &W(f)^2.
\end{align*}
Hence the proof of the claim has been completed. Now we can use the claim to finish the proof of lemma. One should be careful because the assumption in the claim does not have to be true.
But it has no effect on the final proof. Since $M$ is a closed manifold, there exists a nonnegative constant $S_0$ such that $S+S_0\geq0.$
By combining (\ref{29}) and (\ref{210}), we deduce
 \begin{align}\label{213}
\int_M f^{q}\md\mu
\leq &(2^{q}-1)2^{\frac{q(q-s)}{2-s}}\lf(\sum\limits_{k\in\mathbb{Z}}W(f_{k})^{2}\rt)^{\frac{q}{2}}\nonumber\\
\leq &(2^{q}-1)2^{\frac{q(q-s)}{2-s}}\lf(\sum\limits_{k\in\mathbb{Z}}\lf(A\int_{M}\lf(|\n f_{k}|^2+\frac{S+S_0}{4} f_{k} ^{2}\rt)\md \mu+B \int_{M} f_{k} ^{2}\md \mu\rt)\rt)^{\frac{q}{2}}\nonumber\\
\leq &(2^{q}-1)2^{\frac{q(q-s)}{2-s}}\lf( W(f)^2+A\int_{M}\frac{S_0}{4} f^{2}\md \mu\rt)^{\frac{q}{2}},
\end{align}
where the last inequality holds due to the claim. Note that the LHS of (\ref{26}) is bounded from below for all $v\in W^{1,2}(M)$, it implies that
$$
\lambda_1=\inf\lf\{W(f)^2|\int_M f^2\md \mu=1, f\in W^{1,2}(M)\rt\}>0,
$$
i.e.
\begin{equation}\label{214}
\int_{M} f^{2}\md \mu\leq \lambda_1^{-1}W(f)^2,
\end{equation}
for any $f\in W^{1,2}(M)$.
Finally, the Sobolev inequality follows from (\ref{213}) and (\ref{214}). We complete the proof of the lemma.
\end{proof}
Therefore, Theorem \ref{th1} follows directly from Lemma \ref{lm1} and Lemma \ref{lm2}.
\begin{proof}[Proof of Theorem \ref{th1}] Note that our log-Sobolev inequality in Lemma \ref{lm1} just has one more term $4tB_0A_0^{-1}$ than
Lemma \ref{lm2}. Applying the same arguments with the second part in the proof of Lemma \ref{lm2} to our log-Sobolev inequality, we have
\begin{align*}
\int_M f^{q}\md\mu(g(t))
\leq (2^{q}-1)2^{\frac{q(q-s)}{2-s}}e^{\frac{8tB_0}{A_0(n-2)}}\lf( 1+\frac{A_0 S_0}{4\lambda_1(t)}\rt)^{\frac{q}{2}}W(f)^q,
\end{align*}
for any $f\in W^{1,2}(M,g(t))$. Here
$$
W(f)=\lf[A_0\int_M\lf(|\nabla f|^2+\frac{1}{4}Sf^2\rt)\md\mu(g(t))+B_0\int_Mf^2\md\mu(g(t))\rt]^{\frac{1}{2}},
$$
$$\lambda_1(t)=\inf\lf\{W(f)^2|\int_M f^2\md \mu(g(t))=1, f\in W^{1,2}(M,g(t))\rt\},$$ and other symbols are all same as above.

By the definitions of $\mathcal{F}$ entropy and $W(f)$, we can see that $\lambda_1(t)$ is linearly related to the first eigenvalue of $\mathcal{F}$ entropy.
On the other hand, the first eigenvalue of $\mathcal{F}$ entropy is monotone non-decreasing in time (cf. Proposition 3.3 \cite{MR}).
So $\lambda_1(t)$ is also non-decreasing in time. It follows that our uniform Sobolev inequality holds, and we get
$$A(t)=(2^{q}-1)^{\frac{2}{q}}2^{\frac{2(q-s)}{2-s}}e^{\frac{8tB_0}{n A_0}}\lf( 1+\frac{A_0 S_0}{4\lambda_1(0)}\rt)A_0,$$
and $$B(t)=(2^{q}-1)^{\frac{2}{q}}2^{\frac{2(q-s)}{2-s}}e^{\frac{8tB_0}{n A_0}}\lf( 1+\frac{A_0 S_0}{4\lambda_1(0)}\rt)B_0,$$
where $S_0$ depends on $g_0$ and $\phi_0$, and $\lambda_1(0)$ depends on $A_0$, $B_0$, $g_0$ and $\phi_0$. Thus the proof of Theorem \ref{th1} is completed now.
\end{proof}
 From Theorem \ref{th1} we can obtain a uniform Sobolev
inequality along the harmonic-Ricci flow under the assumption of positive first eigenvalue of $\mathcal{F}$ entropy.
Recall that  $\lambda_0$ is the first eigenvalue of $\mathcal{F}$ entropy with respect to the initial metric $g_0$, i.e.
\begin{equation*}
\lambda_0=\inf_{\|v\|_2=1}\int_M(4|\nabla v|^2+Sv^2)\md\mu(g_0).
\end{equation*}
The $\mathcal{F}$ entropy corresponds to Perelman's $\mathcal{F}$ entropy for the Ricci flow introduced in \cite{P1}. Similarly as the Ricci flow, the
harmonic-Ricci flow can be interpreted as the gradient flow of $\mathcal{F}$ entropy modulo a pull-back by a family of diffeomorphisms.
The eigenvalue of $\mathcal{F}$ entropy is a very powerful tool for the research on Ricci flow and Riemannian manifolds. More results can be found in \cite{F3, LJ}.

Since $(M, g_0)$ is a closed Riemannian manifold of dimension $n\geq 3$, the Sobolev inequality holds, i.e. there exist positive constants $A$ and $B$  depending only on the initial metric $g_0$ such that, for any $v\in W^{1,2}(M)$,
\begin{equation}\label{215}
\lf(\int_M v^{\frac{2n}{n-2}}\md\mu(g_0)\rt)^{\frac{n-2}{n}}\leq A\int_M|\nabla v|^2\md\mu(g_0)+B\int_Mv^2\md\mu(g_0).
\end{equation}

If $\lambda_0>0$, combining with Sobolev inequality (\ref{215}), we have
\begin{eqnarray*}
\lf(\int_M v^{\frac{2n}{n-2}}\md\mu(g_0)\rt)^{\frac{n-2}{n}}\leq \tilde{A}_0 \int_M\lf(|\nabla v|^2+\frac{1}{4}Sv^2\rt)\md\mu(g_0)
\end{eqnarray*}
where $\tilde{A}_0$ depends only on initial metric $g_0$, $\phi_0$ and $\lambda_0$. This means that the assumption of Sobolev inequality in Theorem \ref{th1} at initial time holds with $B_0=0$.
Hence, Theorem \ref{th1} gives us the following result.
\begin{cor}\label{co1}
Let ($g(x,t), \phi(x,t)$) be a solution of the harmonic-Ricci flow (\ref{O1}) in $M^n\times[0,T)$ with initial value ($g_0, \phi_0$). Assume that the first eigenvalue $\lambda_0$ of $\mathcal{F}$ entropy with
respect to the initial metric $g_0$ is positive. Then there exists a positive constant $A$, depending only on $n$, $g_0$, $\phi_0$ and $\lambda_0$,
such that for all $v\in W^{1,2}(M,g(t))$, $t\in[0,T)$, it holds that
\begin{eqnarray}\label{216}
\lf(\int_M v^{\frac{2n}{n-2}}\md\mu\big(g(t)\big)\rt)^{\frac{n-2}{n}}\leq A \int_M \lf(|\nabla v|^2+\frac{1}{4}Sv^2\rt)\md\mu\big(g(t)\big).
\end{eqnarray}
\end{cor}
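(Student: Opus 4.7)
The plan is to reduce the corollary to an application of Theorem \ref{th1} with $B_0 = 0$, using the positivity of $\lambda_0$ to absorb the zeroth-order term from the classical Sobolev inequality at the initial time.

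First, since $(M, g_0)$ is a closed Riemannian manifold with $n \geq 3$, the standard Sobolev inequality (\ref{215}) supplies constants $A, B > 0$ depending only on $g_0$. Next, using the variational definition of $\lambda_0$, for every $v \in W^{1,2}(M, g_0)$ one has
$$\lambda_0 \int_M v^2 \, \md\mu(g_0) \leq \int_M \lf(4|\nabla v|^2 + S v^2\rt) \md\mu(g_0),$$
which rearranges to $\int_M v^2 \, \md\mu(g_0) \leq \frac{4}{\lambda_0} \int_M (|\nabla v|^2 + \frac{1}{4} S v^2) \md\mu(g_0)$. Substituting this bound into the $B \int_M v^2 \md\mu(g_0)$ term of (\ref{215}) and regrouping, one obtains the $B_0 = 0$ form of the Sobolev inequality at $t=0$:
$$\lf(\int_M v^{\frac{2n}{n-2}} \md\mu(g_0)\rt)^{\frac{n-2}{n}} \leq \tilde{A}_0 \int_M \lf(|\nabla v|^2 + \tfrac{1}{4} S v^2\rt) \md\mu(g_0),$$
where $\tilde{A}_0$ depends only on $g_0$, $\phi_0$, and $\lambda_0$ (the dependence on $\phi_0$ entering through $S$, which involves $|\nabla\phi_0|^2$).

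Next, I apply Theorem \ref{th1} with initial constants $(A_0, B_0) = (\tilde{A}_0, 0)$. The exponential factor $e^{8tB_0/(nA_0)}$ collapses to $1$ and the coefficient $B(t)$ vanishes identically. The result is a Sobolev inequality along the flow of the form
$$\lf(\int_M v^{\frac{2n}{n-2}} \md\mu(g(t))\rt)^{\frac{n-2}{n}} \leq C \tilde{A}_0 \int_M \lf(|\nabla v|^2 + \tfrac{1}{4} S v^2\rt) \md\mu(g(t)),$$
with $C \tilde{A}_0$ depending only on $n$, $g_0$, $\phi_0$, and $\lambda_0$. This is exactly (\ref{216}).

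The argument is essentially bookkeeping and poses no real obstacle. The one point to verify is that Theorem \ref{th1} allows $C$ to depend on $g_0$ and $\phi_0$ here (it does, under the general hypothesis; we are not invoking the sharper $S\geq 0$ clause that reduces the dependence to $n$ alone). Thus the positivity of $\lambda_0$ alone — combined with the classical initial Sobolev inequality and the uniform inequality in Theorem \ref{th1} — yields the desired conclusion.
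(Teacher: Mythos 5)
Your proposal is correct and follows the paper's own argument exactly: establish the initial-time Sobolev inequality with $B_0=0$ by absorbing the zeroth-order term via the variational characterization of $\lambda_0>0$, then invoke Theorem \ref{th1}. The only elision is in the ``regrouping'': since $S$ need not be nonnegative, the remaining term $A\int_M|\nabla v|^2\,\md\mu(g_0)$ must also be rewritten as $A\int_M\lf(|\nabla v|^2+\frac14 Sv^2\rt)\md\mu(g_0)+\frac{A}{4}\int_M(-S)v^2\,\md\mu(g_0)$, with the last piece bounded by $\frac{A S_0}{4}\int_M v^2\,\md\mu(g_0)$ (where $S_0$ is the negative lower bound of $S(\cdot,0)$ on the closed manifold) and absorbed by a second application of the $\lambda_0$ bound --- a one-line fix that the paper itself also leaves implicit.
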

\begin{rem} The analogous uniform Sobolev inequalities were given in \cite{Z2} for the Ricci flow and \cite{LW} for the extended Ricci flow.
\end{rem}

\section{The proof of Theorem \ref{th2} and Theorem \ref{th3}}
\setcounter{equation}{0}
In this section, we first prove the linear parabolic estimate under the harmonic-Ricci flow with the help of the above Sobolev inequality (\ref{216}) and Moser's iteration. As a result, we derive an upper bound for the heat kernel, which is similar to the one known for the fixed metric case.

\begin{proof} [Proof of Theorem \ref{th2}] For any constant $p\geq 1$, it follows from (\ref{O3}) that
\begin{equation*}
\int_M f^p\partial_tf \md\mu-\int_Mf^p\Delta f \md\mu \leq a\int_Mf^{p+1}\md\mu,
\end{equation*}
where the volume measure $\md\mu=\md\mu(g(t))$ for simplicity, and the same symbol will also be used in the rest of the proof. Integrating by parts, we have
$$
\frac{1}{p+1}\int_M\partial_tf^{p+1}\md\mu+\frac{4p}{(p+1)^2}\int_M\left|\nabla f^{\frac{p+1}{2}}\right|^2\md\mu\leq a\int_Mf^{p+1}\md\mu.
$$
Since $\partial_t \md\mu=-S\md\mu$ and $4p\geq 2(p+1)$ for all $p\geq1$, multiplying both sides by $p+1$, we get
$$
\partial_t\int_Mf^{p+1}\md\mu+\int_MSf^{p+1}\md\mu+2\int_M\left|\nabla f^{\frac{p+1}{2}}\right|^2\md\mu\leq a(p+1)\int_Mf^{p+1}\md\mu.
$$
Notice that the condition $S\geq0$, then we have
\begin{equation}\label{31}
\partial_t\int_Mf^{p+1}\md\mu+\frac{1}{2}\int_M\left(Sf^{p+1}+4\left|\nabla f^{\frac{p+1}{2}}\right|^2\right)\md\mu\leq a(p+1)\int_Mf^{p+1}\md\mu.
\end{equation}
Next for any $0<\tau<\sigma<T$ we define
\begin{equation*}
\psi(t)=\left
\{\begin{array}{ll}
0&  0\leq t\leq\tau~,\\
(t-\tau)/(\sigma-\tau)&  \tau\leq t\leq \sigma~,\\
1&  \sigma\leq t\leq T~.
\end{array}\right.\
\end{equation*}
Multiplying (\ref{31}) by $\psi$, we obtain
\begin{equation*}
\partial_t\left(\psi\int_Mf^{p+1}\md\mu\right)+\frac{1}{2}\psi\int_M\left(Sf^{p+1}+4\left|\nabla f^{\frac{p+1}{2}}\right|^2\right)\md\mu
\leq [a(p+1)\psi+\psi']\int_Mf^{p+1}\md\mu.
\end{equation*}
Integrating this with respect to $t$ yields
\begin{align*}
\sup_{\sigma\leq t\leq T}\int_Mf^{p+1}\md\mu&+\frac{1}{2}\int_{\sigma}^T\int_M\left(Sf^{p+1}+4\left|\nabla f^{\frac{p+1}{2}}\right|^2\right)\md\mu \md t\\
&\leq 2\left[a(p+1)+\frac{1}{\sigma-\tau}\right]\int_{\tau}^T\int_Mf^{p+1}\md\mu \md t.
\end{align*}
By the assumption that $S\geq0$ at the initial time, the first eigenvalue $\lambda_0$ of $\mathcal{F}$ entropy with
respect to the initial metric $g_0$ is positive. Applying H\"{o}lder inequality, the above estimate and the Sobolev inequality (\ref{216}), we deduce
\begin{align*}
\int_{\sigma}^T\int_Mf^{(p+1)(1+\frac{2}{n})}\md\mu \md t & \leq \int_{\sigma}^T\left(\int_Mf^{p+1}\md\mu\right)^{\frac{2}{n}}\left(\int_Mf^{(p+1)\frac{n}{n-2}}\md\mu\right)^{\frac{n-2}{n}}\md t\\
&\leq \sup_{\sigma\leq t\leq T}\left(\int_Mf^{p+1}\md\mu\right)^{\frac{2}{n}}\int_{\sigma}^T
A\int_M\left(Sf^{p+1}+4\left|\nabla f^{\frac{p+1}{2}}\right|^2\right)\md\mu \md t\\
&\leq 4^{1+\frac{1}{n}}A\left[a(p+1)+\frac{1}{\sigma-\tau}\right]^{1+\frac{2}{n}}\left(\int_{\tau}^T\int_Mf^{p+1}\md\mu \md t\right)^{1+\frac{2}{n}}.
\end{align*}
Set
$$
H(p,\tau)=\left(\int_{\tau}^T\int_Mf^{p}\md\mu \md t\right)^{\frac{1}{p}},
$$
for any $p\geq 2$ and $0<\tau<T$. So we get
\begin{equation}\label{32}
H(p(1+\frac{2}{n}), \sigma)\leq \left(4^{1+\frac{1}{n}}A\right)^{\frac{1}{p(1+\frac{2}{n})}}\left(ap+\frac{1}{\sigma-\tau}\right)^{\frac{1}{p}}H(p,\tau).
\end{equation}
Now we fix $0<t_0<t_1<T$, $p_0\geq 2$. Let $\chi=1+\frac{2}{n}$, $p_k=p_0\chi^k$, $\tau_k=t_0+(1-\frac{1}{\chi^k})(t_1-t_0)$. Then it follows from (\ref{32}) that
\begin{equation*}
H(p_{k+1}, \tau_{k+1})\leq
\left(4^{1+\frac{1}{n}}A\right)^{\frac{1}{p_{k+1}}}\left(ap_k+\frac{\chi^k}{t_1-t_0}\frac{\chi}{\chi-1}\right)^{\frac{1}{p_k}}H(p_k,\tau_k).
\end{equation*}
Hence by iteration, we arrive at
\begin{equation*}
H(p_{m+1}, \tau_{m+1})\leq\left(4^{1+\frac{1}{n}}A\right)^{\sum\limits_{k=0}^m\frac{1}{p_{k+1}}}
\left(ap_0+\frac{1}{t_1-t_0}\frac{\chi}{\chi-1}\right)^{\sum\limits_{k=0}^m\frac{1}{p_k}}\chi^{\sum\limits_{k=0}^m\frac{k}{p_k}}H(p_0,\tau_0).
\end{equation*}
Letting $m\rightarrow\infty$, we obtain
\begin{equation*}
H(p_{\infty}, \tau_{\infty})\leq C_0
\left[ap_0+\frac{n+2}{2(t_1-t_0)}\right]^{\frac{n+2}{2p_0}}H(p_0,\tau_0).
\end{equation*} for all $p_0\geq 2$. This means
\begin{equation}\label{33}
\sup_{(x,t)\in M\times[t_1,T]}|f(x,t)|\leq
\left(C_1a+\frac{C_2}{t_1-t_0}\right)^{\frac{n+2}{2p_0}}\left(\int_{t_0}^T\int_Mf^{p_0}\md\mu \md t\right)^{\frac{1}{p_0}},
\end{equation}
where $C_1$ and $C_2$ are both positive constants depending on dimension $n$, $p_0$, initial metric $g_0$, $\phi_0$ and $\lambda_0$.
Moreover, for $0<p<2$, we set
$$
h(s)=\sup_{(x,t)\in M\times[s,T]}|f(x,t)|.
$$
Combining Young inequality with (\ref{33}), we deduce
\begin{align*}
h(t_1)&\leq \left(C_1a+\frac{C_2}{t_1-t_0}\right)^{\frac{n+2}{4}}\left(\int_{t_0}^T\int_Mf^{2}\md\mu \md t\right)^{\frac{1}{2}}\\
&\leq h(t_0)^{\frac{2-p}{2}}\left(C_1a+\frac{C_2}{t_1-t_0}\right)^{\frac{n+2}{4}}\left(\int_{t_0}^T\int_Mf^{p}\md\mu \md t\right)^{\frac{1}{2}}\\
&\leq \frac{1}{2}h(t_0)+\left(C_1a+\frac{C_2}{t_1-t_0}\right)^{\frac{n+2}{2p}}\left(\int_{t_0}^T\int_Mf^{p}\md\mu \md t\right)^{\frac{1}{p}}
\end{align*}
Now we use the iteration method again. Fix $0<t_0<t_1<T$, for some $0<\theta<1$, we let $x_k=t_1-(1-\theta^k)(t_1-t_0)$. Then by iteration
\begin{align*}
h(t_1)=h(x_0)&\leq \frac{1}{2^{k}}h(x_k)
+\left(\int_{t_0}^T\int_Mf^{p}\md\mu \md t\right)^{\frac{1}{p}}\sum_{i=0}^{k-1}\frac{1}{2^i}\left(C_1a+\frac{C_2}{x_i-x_{i+1}}\right)^{\frac{n+2}{2p}}\\
&\leq \frac{1}{2^{k}}h(x_k)
+\left(\int_{t_0}^T\int_Mf^{p}\md\mu \md t\right)^{\frac{1}{p}}\left(C_1a+\frac{C_2}{t_1-t_0}\right)^{\frac{n+2}{2p}}\sum_{i=0}^{k-1}\lf(2\theta^{\frac{n+2}{2p}}\rt)^{-i}
\end{align*}
Choose $0<\theta<1$ such that $2\theta^{\frac{n+2}{2p}}>1$, that is, $\frac{1}{2}<\theta^{\frac{n+2}{2p}}<1$. Taking $k\rightarrow\infty$, we have
\begin{align}\label{34}
h(t_1)\leq \left(C_1a+\frac{C_2}{t_1-t_0}\right)^{\frac{n+2}{2p}}\left(\int_{t_0}^T\int_Mf^{p}\md\mu \md t\right)^{\frac{1}{p}},
\end{align}
for all $0<p<2$ and $0<t_0<t_1<T$. By (\ref{33}) and (\ref{34}) together, as $t_0\rightarrow0$, it follows that
$$
h(t_1)\leq \left(C_1a+\frac{C_2}{t_1}\right)^{\frac{n+2}{2p}}\left(\int_{0}^T\int_Mf^{p}\md\mu \md t\right)^{\frac{1}{p}},\quad \forall p>0
$$
where $C_1$ and $C_2$ are both positive constants depending on dimension $n$, $p$, initial metric $g_0$, $\phi_0$ and $\lambda_0$. Thus we complete the proof now.
\end{proof}

Note that the heat equation (\ref{O2}) is a linear parabolic equation. The above proof for Theorem \ref{th2} can be applied to the heat equation almost verbatim. Thus it is not difficult to get the following corollary, which will be used to determine the upper bound of the heat kernel later.
\begin{cor}\label{co2}
Assume that ($g(x,t), \phi(x,t)$) is a smooth solution to the harmonic-Ricci flow (\ref{O1}) in $M^n\times[0,T]$ with the initial value
($g_0, \phi_0$) and $S\geq0$ at the initial time.
Let $u$ be a nonnegative smooth solution to the heat equation (\ref{O2}) on $M\times[0, T]$. Then for any $0\leq s<t\leq T$ and $p>0$ we have
\begin{equation*}
\sup_{x\in M} |u(x,t)|\leq \frac{C}{(t-s)^{\frac{n+2}{2p}}}\left(\int_{s}^t\int_Mu(x,\tau)^{p}\md\mu \md\tau\right)^{\frac{1}{p}},
\end{equation*}
where $C$ is a positive constant depending on dimension $n$, $p$, $g_0$, $\phi_0$ and $\lambda_0$.
\end{cor}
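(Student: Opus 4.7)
The plan is to reduce the statement to Theorem \ref{th2} with $a=0$. Since $u$ is a nonnegative smooth solution of $(\Delta-\partial_t)u=0$, we have $\partial_t u=\Delta u\leq \Delta u+0\cdot u$, so $u$ satisfies hypothesis (\ref{O3}) in the weak (indeed strong) sense with $a=0$. The only thing that is not literally the statement of Theorem \ref{th2} is that the time integral on the right runs over $[s,t]$ rather than $[0,T]$, and the supremum is taken at time $t$ rather than over the tail $[t_1,T]$.

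To close this gap I would re-inspect the Moser iteration in the proof of Theorem \ref{th2} and observe that the entire chain of estimates (the energy inequality (\ref{31}), the cutoff trick with $\psi(t)$, the recursion (\ref{32}), and the final bootstrap from $p\geq 2$ to $0<p<2$) is local in time: at no point is the lower endpoint $0$ or the upper endpoint $T$ used in a meaningful way. Equivalently, one can perform a time translation by setting $\widetilde{u}(x,\tau)=u(x,\tau+s)$ and $\widetilde{g}(x,\tau)=g(x,\tau+s)$, which is still a solution of the harmonic-Ricci flow on $[0,t-s]$ with the same uniform Sobolev inequality (\ref{216}) because Corollary \ref{co1} is uniform in time and $\lambda_1(s)\geq \lambda_1(0)$ by the monotonicity of the first eigenvalue of $\mathcal{F}$ entropy recorded in the proof of Theorem \ref{th1}. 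Either viewpoint produces the same estimate on $[s,t]$ with the endpoints $(0,T)$ replaced by $(s,t)$.

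Carrying out the iteration with these endpoints and $a=0$ yields the analogue of (\ref{33}) and (\ref{34}), namely
\begin{equation*}
\sup_{x\in M}u(x,t)\leq \left(\frac{C_2}{t-s}\right)^{\frac{n+2}{2p}}\left(\int_{s}^{t}\int_{M}u(x,\tau)^{p}\md\mu\md\tau\right)^{\frac{1}{p}}
\end{equation*}
for every $p>0$, where $C_2$ depends on $n$, $p$, $g_0$, $\phi_0$ and $\lambda_0$. Absorbing the constant into a single $C$ gives the claimed bound. The only step requiring care is verifying that the constant does not inherit any dependence on $s$; this is guaranteed precisely by the time-uniformity of Corollary \ref{co1}, and this is the main (and only) conceptual obstacle in the argument.
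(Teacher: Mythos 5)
Your proposal is correct and follows essentially the same route as the paper, which simply observes that the proof of Theorem \ref{th2} applies to the heat equation ``almost verbatim'' with the interval $[0,T]$ replaced by $[s,t]$ and $a=0$. Your additional remarks --- that the Moser iteration is local in time and that the constants stay independent of $s$ because the Sobolev inequality of Corollary \ref{co1} is uniform in time --- are exactly the justification the paper leaves implicit.
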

In fact, nonnegativity of $S$ in the conditions of Theorem \ref{th2} can be removed. At this time the parabolic estimate will also depend on the negative lower bound of $S$. By the similar arguments of Theorem \ref{th2} we can obtain the following estimate.
\begin{cor}\label{co3}
Assume that ($g(x,t), \phi(x,t)$) is a smooth solution to the harmonic-Ricci flow (\ref{O1}) in $M^n\times[0,T]$ with initial value ($g_0, \phi_0$),
$S\geq-S_0$ for a nonnegative constant $S_0$ at the initial time,
and the first eigenvalue $\lambda_0$ of $\mathcal{F}$ entropy with respect to $g_0$ is positive.
Let $f$ be a nonnegative Lipschitz continuous function on $M\times[0, T]$ satisfying
\begin{equation*}
\partial_t f\leq \Delta f+af
\end{equation*}
on $M\times[0, T]$ in the weak sense, where $a\geq 0$. Then we have for any $0<t\leq T$ and $p>0$
\begin{equation*}
\sup_{x\in M} |f(x,t)|\leq \left(C_0S_0+C_1a+\frac{C_2}{t}\right)^{\frac{n+2}{2p}}\left(\int_{0}^T\int_Mf^{p}\md\mu \md t\right)^{\frac{1}{p}},
\end{equation*}
where $C_0$, $C_1$ and $C_2$ are all positive constants depending on dimension $n$, $p$, $g_0$, $\phi_0$ and $\lambda_0$.
\end{cor}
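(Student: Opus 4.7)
The plan is to repeat the Moser iteration argument of Theorem \ref{th2} essentially verbatim, adjusting only the single place where the hypothesis $S\ge 0$ is invoked. Because $\lambda_{0}>0$ is assumed outright, the uniform Sobolev inequality (\ref{216}) of Corollary \ref{co1} is still available, and that is the only analytic input to the whole iteration scheme.

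Concretely, I would begin by multiplying $\partial_{t}f\le \Delta f+af$ by $f^{p}$ and integrating by parts exactly as in the derivation of (\ref{31}), obtaining
\[
\partial_{t}\int_{M}f^{p+1}\md\mu +\int_{M}Sf^{p+1}\md\mu +2\int_{M}\big|\nabla f^{(p+1)/2}\big|^{2}\md\mu \le a(p+1)\int_{M}f^{p+1}\md\mu.
\]
Where the original proof discarded the $\int Sf^{p+1}\md\mu$ term using $S\ge 0$, I would instead split it as half-and-half and apply $S\ge -S_{0}$ to one of the two copies, producing the analogue of (\ref{31}),
\[
\partial_{t}\!\int_{M}\!f^{p+1}\md\mu + \tfrac{1}{2}\!\int_{M}\!\big(Sf^{p+1}+4\big|\nabla f^{(p+1)/2}\big|^{2}\big)\md\mu \le \Big[a(p+1)+\tfrac{S_{0}}{2}\Big]\!\int_{M}\!f^{p+1}\md\mu.
\]
This is exactly inequality (\ref{31}) with $a(p+1)$ replaced by $a(p+1)+\tfrac{S_{0}}{2}$, i.e.\ with an effective coefficient $\tilde{a}=a+\tfrac{S_{0}}{2(p+1)}$.

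From here the argument proceeds by inserting the cut-off $\psi(t)$, integrating in time, and combining H\"older's inequality with (\ref{216}) to reach the iteration inequality
\[
H(p\chi,\sigma)\le (4^{1+1/n}A)^{1/(p\chi)}\Big(\tilde{a}\,p+\tfrac{1}{\sigma-\tau}\Big)^{1/p}H(p,\tau),\qquad \chi=1+\tfrac{2}{n},
\]
after which the same two-stage Moser iteration---first the $L^{p_{0}}$-to-$L^{\infty}$ bootstrap for $p_{0}\ge 2$, and then the reduction to arbitrary $0<p<2$ via the Young-inequality trick used between (\ref{33}) and (\ref{34})---produces the desired estimate, with each occurrence of $C_{1}a$ in the Theorem \ref{th2} conclusion replaced by $C_{0}S_{0}+C_{1}a$.

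The only real subtlety is bookkeeping the $S_{0}$ contribution through the iteration so that it enters the final bound additively rather than being coupled multiplicatively with $p_{k}$ or with $a$. The key observation that handles this is the uniform estimate $\tilde{a}\,p_{k}=ap_{k}+\tfrac{S_{0}p_{k}}{2(p_{k}+1)}\le ap_{k}+\tfrac{S_{0}}{2}$, valid at every step of the geometric sequence $p_{k}=p_{0}\chi^{k}$. Once this is in place, the summability of $\sum 1/p_{k}$ and $\sum k/p_{k}$ works out exactly as in Theorem \ref{th2}, and the constants $C_{0},C_{1},C_{2}$ depend only on $n$, $p$, $g_{0}$, $\phi_{0}$ and $\lambda_{0}$, as required.
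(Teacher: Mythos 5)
Your proposal is correct and is exactly the "similar arguments" the paper has in mind: the only change to the Moser iteration of Theorem \ref{th2} is to absorb the $\int_M Sf^{p+1}\,\md\mu$ term via $S\ge -S_0$ rather than discard it via $S\ge 0$, turning $a(p+1)$ into $a(p+1)+\tfrac{S_0}{2}$, and your bound $\tfrac{S_0 p_k}{2(p_k+1)}\le\tfrac{S_0}{2}$ correctly keeps the $S_0$ contribution additive through the iteration. The one point worth making explicit (which the paper also leaves implicit, and uses again in Corollary \ref{co4}) is that the initial lower bound $S\ge -S_0$ persists for all $t\in[0,T]$ by the maximum principle applied to the evolution equation for $S$, since your estimate invokes $S\ge -S_0$ at every time slice.
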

Now we turn to control the heat kernel, which is easy to carry out by means of using the result in Corollary \ref{co2}.
\begin{proof} [Proof of Theorem \ref{th3}]
By the definition of heat kernel, we know the fact
$$
\partial_tG(x,t;y,s)-\Delta_xG(x,t;y,s)=0.
$$
Combining with the assumption of $S\geq0$, we have
$$
\partial_t\int_MG(x,t;y,s)\md\mu(x,t)=\int_M\big [\Delta_xG(x,t;y,s)-SG(x,t;y,s)\big ]\md\mu(x,t)\leq0.
$$
It implies that the above integral of heat kernel is non-increasing in $t$. So we derive
$$
\int_MG(x,t;y,s)\md\mu(x,t)\leq\int_MG(x,s;y,s)\md\mu(x,s)=1,
$$
for $\forall 0\leq s<t\leq T$. Therefore, by Corollary \ref{co2}, it follows that
$$
G(x,t;y,s)\leq \frac{C}{(t-s)^{\frac{n+2}{2}}}\int_{s}^t\int_MG(x,\tau;y,s)\md\mu(x,\tau)\md\tau\leq \frac{C}{(t-s)^{\frac{n}{2}}},
$$
where $C$ is a positive constant depending on dimension $n$, $g_0$, $\phi_0$ and $\lambda_0$.
\end{proof}
Thanks to Corollary \ref{co3}, the same upper bound of heat kernel can be given without the assumption of the nonnegativity of $S$ but at the cost of
the dependence on the upper bound for time.
\begin{cor}\label{co4}
Assume that ($g(x,t), \phi(x,t)$) is a smooth solution to the harmonic-Ricci flow (\ref{O1}) in $M^n\times[0,T]$ with initial value ($g_0, \phi_0$)
and the first eigenvalue $\lambda_0$ of $\mathcal{F}$ entropy with respect to $g_0$ is positive. Let $G(x,t;y,s)$ be the heat kernel.
Then for $\forall x,y\in M$ and $\forall 0\leq s<t\leq T$, we have
\begin{equation*}
G(x,t;y,s)\leq \frac{C}{(t-s)^{\frac{n}{2}}},
\end{equation*}
where $C$ is a positive constant, which depends on dimension $n$, $g_0$, $\phi_0$, $\lambda_0$ and $T$.
\end{cor}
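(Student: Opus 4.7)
The plan is to follow the proof of Theorem \ref{th3} while substituting Corollary \ref{co3} for Corollary \ref{co2}, in order to accommodate the loss of the assumption $S \geq 0$. Fix $0 \leq s < t \leq T$ and $y \in M$, and apply the parabolic estimate to $u(x,\tau) := G(x, \tau; y, s)$ on $\tau \in [s, t]$, noting that $u$ solves (\ref{O2}) so (\ref{O3}) holds with $a = 0$.

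The first step is to secure a uniform lower bound for $S$ on $[0, T]$. Since $M$ is closed and $(g_0, \phi_0)$ is smooth, one has $S(\cdot, 0) \geq -S_0$ on $M$ for some $S_0 \geq 0$ depending only on $g_0$ and $\phi_0$. Moreover, under the harmonic-Ricci flow, $S$ satisfies the evolution inequality
\[
\partial_t S = \Delta S + 2|S_{ij}|^2 + 2\alpha|\tau_g \phi|^2 - \dot\alpha |\nabla \phi|^2 \geq \Delta S + \tfrac{2}{n}S^2,
\]
using $\dot\alpha \leq 0$, $\alpha \geq 0$, and the Cauchy--Schwarz bound $|S_{ij}|^2 \geq S^2/n$. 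The parabolic minimum principle then propagates the initial bound to $S \geq -S_0$ on $M \times [0, T]$ (in fact $\min_M S(\cdot, t)$ is nondecreasing in $t$).

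The second step is to control the $L^1$-mass of $G$ in $x$. As in the proof of Theorem \ref{th3},
\[
\partial_\tau \int_M G(x, \tau; y, s)\md\mu(x, \tau) = -\int_M SG\md\mu \leq S_0 \int_M G\md\mu,
\]
so Gronwall together with $\int_M G(\cdot, s; y, s)\md\mu = 1$ gives $\int_M G(x, \tau; y, s)\md\mu \leq e^{S_0(\tau - s)} \leq e^{S_0 T}$ for every $\tau \in [s, t]$. Applying Corollary \ref{co3} to $u$ with $a = 0$ and $p = 1$ now yields
\[
\sup_{x \in M} G(x, t; y, s) \leq \left(C_0 S_0 + \frac{C_2}{t - s}\right)^{(n+2)/2} \int_s^t \int_M G\md\mu\md\tau \leq \left(C_0 S_0 + \frac{C_2}{t - s}\right)^{(n+2)/2}(t - s)\, e^{S_0 T}.
\]
Since $0 < t - s \leq T$, rewriting $(C_0 S_0 + C_2/(t-s))^{(n+2)/2}(t-s) = (C_0 S_0(t-s) + C_2)^{(n+2)/2}(t-s)^{-n/2}$ and bounding $C_0 S_0 (t-s) \leq C_0 S_0 T$ produces the desired inequality with
\[
C = (C_0 S_0 T + C_2)^{(n+2)/2}\, e^{S_0 T},
\]
which depends only on $n$, $g_0$, $\phi_0$, $\lambda_0$, and $T$.

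The main obstacle is the first step: without the nonnegativity of $S$, the $L^1$-mass of the heat kernel need not be nonincreasing, and controlling it requires both the minimum principle for $S$ under the flow and the Gronwall factor $e^{S_0 T}$, which is exactly what forces $C$ to depend on $T$. Once that bound is in hand, the rest is a direct substitution of Corollary \ref{co3} for Corollary \ref{co2} in the argument of Theorem \ref{th3}.
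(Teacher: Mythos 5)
Your proof is correct and follows essentially the same route as the paper: apply Corollary \ref{co3} to the heat kernel with $a=0$, control $\int_M G\,\md\mu$ via Gronwall using the lower bound $S\geq -S_0$, and absorb the resulting factors into a $T$-dependent constant. The only difference is that you explicitly justify the propagation of $S\geq -S_0$ to all of $[0,T]$ via the evolution equation and the minimum principle, a step the paper asserts without proof.
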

\begin{proof} By Theorem \ref{th3}, we only need to prove the estimate for the case that $S$ has negative minimum at the initial time.
We can assume that $S_0=-\min\limits_{x\in(M,g_0)} S(x,0)>0$.
From the definition of heat kernel and Corollary \ref{co3}, we have
\begin{equation}\label{35}
\sup_{x\in M} |G(x,t;y,s)|\leq \lf(C_0S_0+\frac{C_2}{t-s}\rt)^{\frac{n+2}{2}}\int_{s}^t\int_MG(x,\tau;y,s)\md\mu(x,\tau) \md\tau,
\end{equation}
for $\forall 0\leq s<t\leq T$.
In addition, since $S>-S_0$ for any time $t\in [0,T]$, we deduce
\begin{align*}
\partial_t\int_MG(x,t;y,s)\md\mu(x,t)=&\int_M\big [\Delta_xG(x,t;y,s)-SG(x,t;y,s)\big ]\md\mu(x,t)\\
\leq& S_0\int_MG(x,t;y,s)\md\mu(x,t).
\end{align*}
Integrating from $s$ to $\tau$ gives
$$
\int_MG(x,\tau;y,s)\md\mu(x,\tau)\leq e^{S_0(\tau-s)}\int_MG(x,s;y,s)\md\mu(x,s)=e^{S_0(\tau-s)},
$$
for $\forall 0\leq s<\tau\leq T$. Therefore, combining with (\ref{35}), we conclude that
$$
G(x,t;y,s)\leq \lf(C_0S_0+\frac{C_2}{t-s}\rt)^{\frac{n+2}{2}}\int_{s}^te^{S_0(\tau-s)}\md\tau\leq \frac{C}{(t-s)^{\frac{n}{2}}},
$$
where the last inequality follows from $0<t-s\leq T$ and $C$ is a positive constant depending on dimension $n$, $g_0$, $\phi_0$, $\lambda_0$ and $T$.
\end{proof}
\noindent{\bf{Acknowledgements.}} This work was carried out while the authors were visiting Northwestern Univerisity. We would like to thank Professor Valentino Tosatti and Professor Ben Weinkove for hospitality and helpful discussions. We also thank Wenshuai Jiang for some useful conversations.

\begin{flushleft}
Shouwen Fang\\
School of Mathematical Science, Yangzhou University,\\
Yangzhou 225002, P. R. China\\
E-mail: shwfang@163.com
\end{flushleft}

\begin{flushleft}
Tao Zheng\\
School of Mathematics and Statistics, Beijing Institute of Technology,\\
 Beijing 100081, P. R. China\\
E-mail: zhengtao08@amss.ac.cn
\end{flushleft}

\end{document}